\numberwithin{figure}{section}
\numberwithin{equation}{section} 
\numberwithin{figure}{section} 
\theoremstyle{plain}
\theoremstyle{plain}
\theoremstyle{plain}
\theoremstyle{remark}
\theoremstyle{remark}
\theoremstyle{plain}
\def\exp{\hbox{\rm exp}}
\def\<{{\langle }}
\def\>{{\rangle }}
\def\exp{\hbox{\rm exp}}
\def\<{{\langle }}
\def\>{{\rangle }}
\theoremstyle{plain}
\newtheorem{theorem}{Theorem}[section]
\newtheorem*{theorem*}{Theorem}
\newtheorem*{thm*}{Theorem}
\newtheorem{corollary}{Corollary}[section]
\newtheorem{remark}{Remark}[section]
\newtheorem{lemma}{Lemma}[section]
\newtheorem{definition}{Definition}[section]
\title[$\delta$-stable minimal hypersurfaces ]{Complete two-sided $\delta$-stable minimal hypersurfaces in $\mathbf R^{n+1}$}
\author{Qing-Ming Cheng and  Guoxin Wei}
\address{Qing-Ming Cheng \newline \indent Mathematical Science Research Center, \newline
\indent Chongqing University of Technology, Chongqing 400054,  P. R. China  \newline \indent
qingmingcheng@yahoo.com, chengqingming@cqut.edu.cn}
\address{Guoxin Wei \newline \indent School of Mathematical Sciences, \newline \indent South China Normal University,
510631, Guangzhou,  China \newline \indent weiguoxin@tsinghua.org.cn}
\begin{document}
\maketitle

\begin{abstract} In this paper, we study complete $\delta$-stable minimal hypersurfaces in $\mathbf R^{n+1}$. We prove that  complete two-sided
$\delta$-stable minimal hypersurfaces have Euclidean volume growth if  $3\leq n\leq 5$ and $\delta>\delta_0(n)$, where
$\delta_0(3)=1/3$, $\delta_0(4)=1/2$  and  $\delta_0(5)=21/22$. We also give a sufficient  condition such that complete two-sided $\delta$-stable minimal
hypersurfaces in $\mathbf R^{n+1}$ is the hyperplane. Furthermore, we prove that a complete two-sided
$\delta$-stable minimal hypersurface is  the hyperplane if  $3\leq n\leq 5$ and $\delta>\delta_1(n)$, where
$\delta_1(3)=3/8$, $\delta_1(4)=2/3$  and  $\delta_1(5)=21/22$.
\end{abstract}
\maketitle
\section{ Introduction}
\noindent
Let $X:M^n\to \mathbf R^{n+1}$ be a hypersurface in the Euclidean space $\mathbf R^{n+1}$.
If the mean curvature $H=0$ of $X:M^n\to \mathbf R^{n+1}$,  $X:M^n\to \mathbf R^{n+1}$ is
called a minimal hypersurface. It is well-known that minimal hypersurfaces in $ \mathbf R^{n+1}$
are critical points of the $n$-volume functional.  A hypersurface in $\mathbf R^{n+1}$ is called
a graph if $X:M^n\to \mathbf R^{n+1}$  can be expressed  by
$$
X(x_1, x_2, \cdots, x_n)=(x_1, x_2, \cdots, x_n, u(x_1, \cdots, x_n)), \ \ (x_1, x_2, \cdots, x_n) \in M^n \subset \mathbf R^n,
$$
where $u=u(x_1, x_2, \cdots, x_n)$ is a smooth function. If $M^n = \mathbf R^n$, the graph $X:M^n\to \mathbf R^{n+1}$
is called an entire graph over  $\mathbf R^n$.

\noindent
It is also well-known that Bernstein \cite{b1}  proved that an entire minimal graph in $\mathbf R^3$ is the plane $\mathbf R^2$. The same
problem for higher dimensions, which is called the Bernstein problem, was studied by Fleming \cite{f}, De Giorgi \cite{d},  Almgren \cite{a}
and Simons \cite{s}.
They showed that an entire minimal graph in $\mathbf R^{n+1}$ is the  plane $\mathbf R^n$ for $n\leq 7$. For $n\geq 8$,
Bombieri, De Giorgi and Giusti \cite{bgg} were able to construct entire minimal graphs that are not hyperplane.
Hence, the so-called Bernstein problem was resolved completely. \\
Since a minimal hypersurface $X:M^n\to \mathbf R^{n+1}$ is a critical point of the $n$-volume functional, the second variation of the
$n$-volume functional is given by
$$
\int_M\big (|\nabla \varphi|^2-S\varphi^2\big)dv, \ \varphi \in \mathcal C_c^{1}(M),
$$
where $S=|A_M|^2$ and $A_M$ denotes the  second fundamental form of hypersurface $X:M^n\to \mathbf R^{n+1}$.
A minimal hypersurface $X:M^n\to \mathbf R^{n+1}$ is called stable if, for any $\varphi \in \mathcal C_c^{1}(M)$,
$$
\int_M\big (|\nabla \varphi|^2-S\varphi^2\big)dv\geq 0
$$
holds.
Minimal graphs are stable. As a natural generalization of the Bernstein problem,
one asks whether an $n$-dimensional complete two-sided stable minimal hypersurface $X:M^n\to \mathbf R^{n+1}$
in $\mathbf R^{n+1}$ is a hyperplane? This problem is called the stable Bernstein problem.
For the stable Bernstein problem, do Carmo and Peng \cite{cp}, Fischer-Colbrie and Schoen \cite{fs} and Pogorelov \cite{p}
resolved it for $n=2$, affirmatively,  that is, they proved that the plane is the only complete two-sided
stable minimal surfaces in $\mathbf R^3$.\\
For higher dimensions, Schoen, Simon and Yau \cite{ssy} made an important breakthrough. They showed that the hyperplane
$\mathbf R^n$ is the only complete two-sided stable minimal hypersurfaces with Euclidean volume growth
 in $\mathbf R^{n+1}$ for $3\leq n\leq 5$. For $n=6$, Schoen and Simon \cite{ss} also gave
a positive answer under an additional condition that $X:M^n\to \mathbf R^{n+1}$  is embedded.
Very recently, Bellettini \cite{b} has obtained the same result for $n=6$ as one of  Schoen, Simon and Yau \cite{ssy} under the
Euclidean volume growth condition
$$
 \text{\rm vol }(M\cap X^{-}(B_R^{n+1}) \leq \Lambda R^n.
 $$
We should notice that Schoen, Simon and Yau \cite{ssy} used volume growth condition of the intrinsic geodesic balls.
Bellettini \cite{b} made use of the extrinsic volume growth condition of balls.  Since,  for $n= 7$,
Bombieri, De Giorgi and Giusti \cite{bgg} were able to construct complete two-sided stable minimal hypersurfaces which are not flat,
therefore, in order to resolve the stable Bernstein problem, one needs to   prove that
complete two-sided  stable minimal hypersurfaces in $\mathbf R^{n+1}$ for $3\leq n\leq 6$ have Euclidean volume growth.
Up until very recently, Chodosh and Li \cite{cl2, cl3, cls, clms}  have made very important contributions. Namely, Chodosh and Li \cite{cl3}
have resolved the stable Bernstein problem for $n=3$. Later, very dramatically, Chodosh and Li \cite{cl2}  have found  the second strategy
to resolve the stable Bernstein problem for $n=3$ and  Catino, Mastrolia and Roncoroni \cite{cmr} have developed a new
 strategy to  resolved the stable Bernstein problem for $n=3$ at the almost same time by the technique of conformal transformations.
By making use of ideas of the second strategy of Chodosh and Li \cite{cl3} and bi-Ricci curvature, Chodosh, Li, Minter and Stryker
\cite{clms} have been able  to resolve the stable Bernstein problem for $n=4$ and Mazet \cite{m}, by  following the second strategy of
Chodosh and Li \cite{cl3} and using the weighted bi-Ricci curvature, has been able to resolve the stable Bernstein problem for $n=5$.
But,  the stable Bernstein problem for the last case $n=6$ is still open.

\noindent
As a natural generalization of the concept of stable, in order to study the total curvature of embedded minimal disks,
Colding and Minicozzi \cite{cm} introduced the concept of $\delta$-stable as following:

\begin{definition}
An $n$-dimensional  minimal hypersurface $X:M^n\to \mathbf R^{n+1}$
in $\mathbf R^{n+1}$  is called $\delta$-stable, $\delta>0$ if
$$
\int_M\big (|\nabla \varphi|^2-\delta S\varphi^2\big)dv\geq 0, \ \varphi \in \mathcal C_c^{1}(M)
$$
holds.
\end{definition}
\begin{remark}
It is easy to know that stable is $\delta$-stable and $1$-stable is stable.
\end{remark}
\noindent
$\delta$-stable minimal hypersurfaces in $\mathbf R^{n+1}$  are closely related to anisotropic minimal
hypersurfaces in $\mathbf R^{n+1}$. We suggest readers to see the  reference \cite{cl3} for details.
On  $\delta$-stable minimal hyperusfraces in $\mathbf R^{n+1}$, Tam and Zhou \cite{tz} proved that
$n$-dimensional catenoid is $\frac{n-2}n$-stable. Furthermore, they proved that an $n$-dimensional
complete two-sided $\frac{n-2}n$-stable minimal is a hyperplane or Catenoid if
$$
\lim_{R\to\infty}\dfrac{\int_{B_{2R}(p_0)\setminus B_R(p_0)}S^{\frac{n-2}n}dv}{R^2}=0.
$$
where $B_R(p_0)$ denotes the geodesic ball with radius  $R$ and centered at $p_0$.
Anderson \cite{a}, Fu and Li \cite{fl},
and so on proved that an $n$-dimensional complete two-sided $\delta$-stable minimal hypersurface
$X:M^n\to \mathbf R^{n+1}$ with finite total curvature is a hyperplane if $\delta>\frac{n-2}n$ and is either
a hyperplane or a catenoid if
$\delta=\frac{n-2}{n}$.

\noindent
 For general case, Kawai \cite{k} proved that a  complete two-sided $\delta$-stable minimal surface
$X:M^2\to \mathbf R^{3}$ is a plane if $\delta>\frac18$.    Cheng and Zhou \cite{cz}
have proved that an $n$-dimensional complete two-sided $\frac{n-2}n$-stable minimal hypersurface
$X:M^n\to \mathbf R^{n+1}$ has either one end or is   a catenoid if
$$
\begin{cases}
\lim_{R\to\infty}\dfrac{\sup_{B(R)} \sqrt S}{R^{\frac{n-3}2}}=0, &  n>3,\\
\lim_{R\to\infty}\dfrac{\sup_{B(R)} \sqrt S}{\log R}=0, &  n=3.
 \end{cases}
 $$
 Very recently, Hong, Li and Wang \cite{hlw} have studied $\delta$-stable minimal hypersurfaces. They have proved
 that for $n\geq 3$ and $\delta>\max\{\frac{n-2}n, \frac{(n-2)^2}{4(n-1)}\}$, an $n$-dimensional complete
 two-sided $\delta$-stable minimal hypersurface in $\mathbf R^{n+1}$
 satisfying the Euclidean volume growth condition
 $$
 \text{\rm vol }(M\cap X^-(B_R^{n+1}) \leq \Lambda R^n
 $$
is flat.
In this paper, we prove
\begin{theorem}
For an $n$-dimensional complete
 two-sided $\delta$-stable minimal hypersurface $X:M^n\to \mathbf R^{n+1}$  in $\mathbf R^{n+1}$
 with  $\delta>\frac{n-2}n$, if, for some $q$ with $\frac{n-2}{n}<q<\delta$ such that
 $$
 \dfrac{\int_{B_{R} (p_0)}S^{\frac{qn}{n-2}}dv}{R^{\frac{(n-2)}{q}-2}}\leq \varepsilon_1,
 $$
 holds for sufficient large  $R>1$,  $X:M^n\to \mathbf R^{n+1}$ is a hyperplane,
 where   $B_R(p_0)$ is a geodesic ball of radius $R$ centered at some point $p_0$ in $M^n$
 and $\varepsilon_1$ is a small constant depending on the dimension $n$, $\delta$ and $q$.
\end{theorem}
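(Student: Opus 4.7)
The plan is to adapt the Schoen--Simon--Yau $L^p$-iteration to the $\delta$-stable setting. The hypothesis is calibrated precisely so that the ratio $\int_{B_R}S^{qn/(n-2)}dv/R^{(n-2)/q-2}$ is scale-invariant at the critical value $q=(n-2)/n$; the assumption $q>(n-2)/n$ therefore represents a super-critical integral control on the second fundamental form, which combined with $\delta$-stability should force $|A|\equiv0$.

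The first step is to combine the Simons inequality with $\delta$-stability. On a minimal hypersurface in $\mathbf R^{n+1}$, the refined Kato inequality of Schoen--Simon--Yau gives $|A|\Delta|A|\geq(2/n)|\nabla|A||^2-|A|^4$. Multiplying by $|A|^{2q-2}\eta^{2(q+1)}$ for a compactly supported Lipschitz cutoff $\eta$, integrating by parts, and inserting the test function $\varphi=|A|^q\eta^{q+1}$ into the $\delta$-stability inequality, Cauchy--Schwarz absorption of the cross terms yields
\begin{equation*}
\bigl(\delta-K(n,q)\bigr)\int_M|A|^{2q+2}\eta^{2(q+1)}\,dv\leq C(n,q,\delta)\int_M|A|^{2q}\eta^{2q}|\nabla\eta|^2\,dv,
\end{equation*}
with $K(n,q)=q^2/(2q-1+2/n)$. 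A short calculation shows $K(n,(n-2)/n)=(n-2)/n$ and $K(n,q)<q$ for $q>(n-2)/n$; together with $q<\delta$, the coefficient $\delta-K(n,q)$ is strictly positive throughout the admissible range.

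The second step is to bring in the hypothesis via Hölder's inequality with the conjugate exponents $(n/(n-2),n/2)$, which splits the right-hand side so that $|A|$ appears with the hypothesized power $qn/(n-2)$:
\begin{equation*}
\int_M|A|^{2q}\eta^{2q}|\nabla\eta|^2\,dv\leq\left(\int_{\mathrm{supp}\,\eta}S^{qn/(n-2)}\,dv\right)^{(n-2)/n}\left(\int_M|\nabla\eta|^{n}\,dv\right)^{2/n}.
\end{equation*}
In regimes where this direct Hölder is inadmissible one substitutes the Michael--Simon--Sobolev inequality applied to $|A|^q\eta^{q+1}$, borrowing the critical Sobolev exponent $2n/(n-2)$ to match the hypothesis power.

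Finally, take $\eta$ equal to $1$ on $B_R(p_0)$, vanishing outside $B_{2R}(p_0)$, with $|\nabla\eta|\leq C/R$. Substituting the hypothesis $\int_{B_{2R}}S^{qn/(n-2)}dv\leq\varepsilon_1R^{(n-2)/q-2}$, handling the $|\nabla\eta|^n$ factor via a second application of Michael--Simon--Sobolev (or by iterating on smaller scales), the calibration of the exponent $(n-2)/q-2$ is exactly what forces the net power of $R$ to be non-positive when $q>(n-2)/n$. For $\varepsilon_1$ sufficiently small, the resulting bound on $\int_{B_R}|A|^{2q+2}dv$ tends to $0$ as $R\to\infty$, so $|A|\equiv0$ on $M$ and $M$ is a hyperplane. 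The hard part is precisely the second step: the Hölder/Sobolev exponents must remain admissible across the entire range $(n-2)/n<q<\delta$, and the cutoff factor $\int|\nabla\eta|^n$ (which a priori involves the unknown volume of $B_{2R}$) must be controlled without an independent Euclidean volume growth assumption, which is where Michael--Simon--Sobolev enters essentially.
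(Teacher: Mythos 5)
Your first step (the SSY-type estimate obtained by inserting $|A|^{q}\eta^{q+1}$ into $\delta$-stability and combining with Simons' inequality) is sound in spirit, but the closing step fails for two concrete reasons, and the paper's actual proof is structurally different: it is a De Giorgi level-set iteration, not a single $L^p$ estimate. First, even granting Euclidean volume growth so that $\bigl(\int|\nabla\eta|^{n}\bigr)^{2/n}$ is bounded, your chain of inequalities yields
$$
\int_{B_R}S^{q+1}\,dv\ \leq\ C\,\varepsilon_1^{\frac{n-2}{n}}\,R^{\frac{n-2}{n}\left(\frac{n-2}{q}-2\right)},
$$
and the exponent $\frac{n-2}{n}\bigl(\frac{n-2}{q}-2\bigr)$ is strictly positive for every $q<\frac{n-2}{2}$, vanishing only at $q=\frac{n-2}{2}$. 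Since the theorem must handle all $q\in(\frac{n-2}{n},\delta)$ and $\delta$ may be arbitrarily close to $\frac{n-2}{n}<\frac{n-2}{2}$, the right-hand side blows up as $R\to\infty$ precisely in the regime the theorem addresses; no choice of small $\varepsilon_1$ can repair a positive power of $R$. (The hypothesis is indeed scale-invariant at $q=\frac{n-2}{n}$, but that does not make your net power of $R$ non-positive.) Second, the factor $\bigl(\int|\nabla\eta|^{n}\bigr)^{2/n}\sim\bigl(R^{-n}\mathrm{vol}(B_{2R})\bigr)^{2/n}$ genuinely requires an \emph{upper} volume bound, and Michael--Simon cannot supply one: applied to cutoffs it produces lower bounds $\mathrm{vol}(B_R)\geq cR^{n}$, never upper bounds. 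So the argument has a real gap at exactly the two points you yourself flagged as the hard part.

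The paper circumvents both problems with a De Giorgi truncation. It sets $u=S^{q/2}$, truncates at levels $k_l=k(1-2^{1-l})\nearrow k:=R^{-(n-2)/n}$ on shrinking balls $B_{R_l}$ with $R_l\searrow R/2$, tests $\delta$-stability with $(u-k_l)^{+}f_l$, and applies Michael--Simon to $(u-k_l)^{+}f_l$ to obtain a recursion $S_{l+1}\leq C_0^{\frac{n}{n-2}}\bigl(C^{\frac{n}{n-2}}\bigr)^{l-1}S_{l-1}^{\frac{n}{n-2}}$ for $S_l=\int_{\{u>k_l\}\cap B_{R_l}}[(u-k_l)^{+}]^{\frac{2n}{n-2}}dv$. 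The two ingredients you are missing are: (i) on the super-level set $\{u>k_l\}$ one has $1\leq(2^{l-1}/k)^{\frac{2n}{n-2}}[(u-k_{l-1})^{+}]^{\frac{2n}{n-2}}$, so the measure of the level set (your volume factor) is reabsorbed into the iterated quantity and no volume growth assumption is needed; (ii) the choice $k=R^{-(n-2)/n}$ makes the power of $R$ in $C_0^{n/2}$ exactly $R^{-((n-2)/q-2)}$, cancelling the growth $R^{(n-2)/q-2}$ allowed for $S_1=\int_{B_R}S^{\frac{qn}{n-2}}dv$, so the smallness condition $C_0^{n/2}C^{n^2/2}S_1<1$ reduces to smallness of $\varepsilon_1$ alone for every admissible $q$. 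The output is the pointwise bound $S^{q/2}\leq R^{-(n-2)/n}$ on $B_{R/2}$, and letting $R\to\infty$ gives $S\equiv0$. To salvage your route you would have to replace the final step by such a truncation argument, or else assume both Euclidean volume growth and $q\geq\frac{n-2}{2}$, neither of which the theorem grants.
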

\begin{theorem}\label{thm:1.2}
For an $n$-dimensional complete two-sided $\delta$-stable minimal hypersurface $X:M^n\to \mathbf R^{n+1}$
in $\mathbf R^{n+1}$ with $\delta>\frac{n-2}n$, we have  the following inequality:
\begin{equation}
\begin{aligned}
&\int_{M}S^{2k+1}f^2dv
\leq C_1\int_MS^{2k}|\nabla f |^2dv, \  \ f\in \mathcal C_c^1(M), \\
&\int_{M}(\sqrt Sf)^pdv
\leq C_2\int_M|\nabla f |^pdv, \  \ f\in \mathcal C_c^1(M) \\
\end{aligned}
\end{equation}
 with $p=4k+2$,
 where  $k$ satisfies $\delta-\sqrt{\delta(\delta-\frac{n-2}n)}<2k<\delta+\sqrt{\delta(\delta-\frac{n-2}n)}$
 and $C_1$  and $C_2$ are  positive constants  depending on $n, \ \delta, \ k$.
 \end{theorem}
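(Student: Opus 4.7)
The plan is to prove the first inequality by combining the Simons equation (refined by Kato's inequality) with the $\delta$-stability hypothesis applied to a carefully chosen power test function, and then to deduce the second inequality from the first by a substitution and a Hölder argument. The range of $k$ in the hypothesis will turn out to be exactly the open interval on which a certain quadratic in $k$ is negative, and this positivity is what enables the subsequent absorption of cross terms.

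Writing $u := \sqrt{S} = |A_M|$, Simons' identity for a minimal hypersurface reads $\tfrac12\Delta S = |\nabla A|^2 - S^2$, and the refined Kato inequality $|\nabla A|^2 \geq (1+\tfrac{2}{n})|\nabla u|^2$ yields the pointwise estimate $u\,\Delta u \geq \tfrac{2}{n}|\nabla u|^2 - u^4$ on $\{u>0\}$; the zero set is handled by the standard regularisation $u\mapsto\sqrt{u^2+\varepsilon}$. Multiplying this inequality by $S^{2k-1}f^2$ and integrating by parts (to move $\Delta$ onto the test function) yields
$$
\int_M S^{2k+1}f^2\,dv \;\geq\; \Big(4k-1+\tfrac{2}{n}\Big)\!\int_M S^{2k-1}|\nabla u|^2 f^2\,dv \;+\; 2\!\int_M u^{4k-1}f\,\langle\nabla u,\nabla f\rangle\,dv.
$$
In parallel, the $\delta$-stability inequality tested against $\varphi = u^{2k}f = S^{k}f$ expands to
$$
\delta\!\int_M S^{2k+1}f^2\,dv \;\leq\; 4k^2\!\int_M S^{2k-1}|\nabla u|^2 f^2\,dv \;+\; 4k\!\int_M u^{4k-1}f\,\langle\nabla u,\nabla f\rangle\,dv \;+\; \int_M S^{2k}|\nabla f|^2\,dv.
$$

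Combining the Simons-Kato lower bound (multiplied by $\delta$) with the stability upper bound eliminates $\int_M S^{2k+1}f^2\,dv$ and produces
$$
\bigl[\,4\delta k - 4k^2 - \delta\tfrac{n-2}{n}\,\bigr]\!\int_M S^{2k-1}|\nabla u|^2 f^2\,dv \;+\; 2(\delta-2k)\!\int_M u^{4k-1}f\langle\nabla u,\nabla f\rangle\,dv \;\leq\; \int_M S^{2k}|\nabla f|^2\,dv.
$$
The coefficient $4\delta k-4k^2-\delta\tfrac{n-2}{n}$ is a quadratic in $k$ whose discriminant $16\delta(\delta-\tfrac{n-2}{n})$ is positive precisely when $\delta>\tfrac{n-2}{n}$, and whose roots are exactly $2k = \delta\pm\sqrt{\delta(\delta-\tfrac{n-2}{n})}$. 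On the open interval from the theorem the coefficient is therefore a positive constant $\gamma(n,\delta,k)$. The cross term is absorbed via Young's inequality $|u^{4k-1}f\langle\nabla u,\nabla f\rangle|\leq \tfrac{\varepsilon}{2}S^{2k-1}|\nabla u|^2 f^2+\tfrac{1}{2\varepsilon}S^{2k}|\nabla f|^2$, with $\varepsilon$ small enough to preserve positivity of the leading coefficient; this yields an auxiliary estimate $\int_M S^{2k-1}|\nabla u|^2 f^2\,dv\leq C_0\int_M S^{2k}|\nabla f|^2\,dv$. Substituting back into the stability bound and absorbing the remaining cross term in the same way gives the first claimed inequality with constant $C_1=C_1(n,\delta,k)$.

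For the second inequality, I would substitute $f\mapsto f^{2k+1}$ in the first inequality; since $|\nabla(f^{2k+1})|^2 = (2k+1)^2 f^{4k}|\nabla f|^2$, and applying Hölder with conjugate exponents $\tfrac{2k+1}{2k}$ and $2k+1$ to the product $S^{2k}f^{4k}|\nabla f|^2 = (S^{2k+1}f^{4k+2})^{2k/(2k+1)}(|\nabla f|^{4k+2})^{1/(2k+1)}$ gives
$$
\int_M S^{2k+1}f^{4k+2}\,dv \;\leq\; C_1(2k+1)^2 \Big(\int_M S^{2k+1}f^{4k+2}\,dv\Big)^{\!\tfrac{2k}{2k+1}}\Big(\int_M |\nabla f|^{4k+2}\,dv\Big)^{\!\tfrac{1}{2k+1}}.
$$
Dividing through by the first factor and raising to the $(2k+1)$-st power produces the second inequality with $p=4k+2$ and $C_2=[C_1(2k+1)^2]^{2k+1}$. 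The main technical difficulty is the sign control in the combined estimate: the range of $k$ is tight, and the Young parameter $\varepsilon$ must be chosen carefully so that the leading coefficient $\gamma$ remains strictly positive after absorbing the cross term. Apart from this, the argument is a direct adaptation of the Schoen--Simon--Yau Simons-plus-stability estimate to the $\delta$-stable setting.
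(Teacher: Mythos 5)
Your proposal is correct and follows essentially the same route as the paper: Simons' identity refined by Kato's inequality, the $\delta$-stability inequality tested against $S^k f$, the quadratic condition $4k^2-4\delta k+\delta\frac{n-2}{n}<0$ determining the stated range of $k$, Young absorption of the cross term, and finally the substitution $f\mapsto f^{p/2}$ with Hölder's inequality. The only (inessential) differences are organizational: the paper regularizes with $(S+\epsilon)^k$ and splits into cases according to the sign of the cross term $\int_M(S+\epsilon)^{2k-1}f\nabla S\cdot\nabla f\,dv$, whereas you first eliminate $\int_M S^{2k+1}f^2\,dv$ to obtain the auxiliary gradient estimate and then substitute back.
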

\begin{corollary}
For an $n$-dimensional complete
 two-sided $\delta$-stable minimal hypersurface $X:M^n\to \mathbf R^{n+1}$  in $\mathbf R^{n+1}$
 with  $\delta>\frac{n(n-2)}{4(n-1)}$,  if
 $$
 \text{\rm vol }\{B_{R} (p_0)\} \leq \Lambda R^n,
 $$
 holds for sufficient large  $R>1$,  $X:M^n\to \mathbf R^{n+1}$ is a hyperplane,
 where   $B_R(p_0)$ is a geodesic ball of radius $R$ centered at some point $p_0$ in $M^n$.
 \end{corollary}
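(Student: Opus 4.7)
The plan is to deduce the corollary as a direct consequence of the second (Sobolev-type) inequality in Theorem \ref{thm:1.2}, combined with a standard Lipschitz cut-off and the Euclidean volume growth hypothesis.

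The first step is to verify that the threshold $\delta>\frac{n(n-2)}{4(n-1)}$ is precisely what is needed to pick an exponent $k$ in the admissible range of Theorem \ref{thm:1.2} with the additional property $p:=4k+2>n$. Writing $b=\frac{n-2}{n}$, a short computation (using $\delta-b=\frac{(n-2)^3}{4n(n-1)}$ at the threshold) shows that
$$
\delta+\sqrt{\delta(\delta-b)}=\frac{n-2}{2}\qquad\text{exactly when}\qquad \delta=\frac{n(n-2)}{4(n-1)},
$$
and the left-hand side is strictly increasing in $\delta$ on $(b,\infty)$. Since $\frac{n(n-2)}{4(n-1)}>\frac{n-2}{n}$ for every $n\ge 3$, the hypothesis $\delta>\frac{n(n-2)}{4(n-1)}$ automatically forces $\delta>b$, so one may choose $k>0$ with
$$
\delta-\sqrt{\delta(\delta-b)}<2k<\delta+\sqrt{\delta(\delta-b)}\qquad\text{and}\qquad 2k>\frac{n-2}{2},
$$
whence $p=4k+2>n$. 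For this $k$, Theorem \ref{thm:1.2} supplies a constant $C_2=C_2(n,\delta,k)$ such that
$$
\int_M(\sqrt{S}\,f)^p\,dv\le C_2\int_M|\nabla f|^p\,dv\qquad\text{for all }f\in\mathcal C_c^1(M).
$$

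Next, I would plug in a standard Lipschitz cut-off $f=f_R$ satisfying $f\equiv 1$ on $B_R(p_0)$, $f\equiv 0$ outside $B_{2R}(p_0)$ and $|\nabla f|\le 2/R$. Combined with the Euclidean volume growth $\text{vol}(B_{2R}(p_0))\le\Lambda(2R)^n$, this gives
$$
\int_{B_R(p_0)}S^{p/2}\,dv\;\le\;C_2\Bigl(\tfrac{2}{R}\Bigr)^{\!p}\,\text{vol}(B_{2R}(p_0))\;\le\;C\,R^{\,n-p}
$$
for every sufficiently large $R$, where $C=C(n,\delta,k,\Lambda)$.

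Because $p>n$, sending $R\to\infty$ yields $\int_M S^{p/2}\,dv=0$, and hence $S\equiv 0$ on $M$. Thus $A_M\equiv 0$, so $M$ is totally geodesic, and a complete connected totally geodesic hypersurface of $\mathbf R^{n+1}$ is necessarily a hyperplane. The only substantive point in the whole argument is the algebraic identity that matches $\delta=\frac{n(n-2)}{4(n-1)}$ with the boundary exponent $p=n$ of the Sobolev inequality of Theorem \ref{thm:1.2}; once that identity is in hand, the rest is a routine cut-off integration.
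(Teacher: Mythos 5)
Your proposal is correct and follows essentially the same route as the paper's own proof: both apply the $L^p$ Sobolev-type inequality of Theorem \ref{thm:1.2} to a cut-off supported in $B_{2R}(p_0)$, use the Euclidean volume growth to bound the right-hand side by $CR^{n-p}$, and observe that the threshold $\delta=\frac{n(n-2)}{4(n-1)}$ corresponds exactly to $\delta+\sqrt{\delta(\delta-\frac{n-2}{n})}=\frac{n-2}{2}$, i.e.\ to the boundary exponent $p=n$. Your monotonicity argument for choosing $2k>\frac{n-2}{2}$ inside the admissible interval is just a cleaner phrasing of the paper's choice $2k=\delta_\epsilon+\sqrt{\delta_\epsilon(\delta_\epsilon-\frac{n-2}{n})}$ with $\delta_\epsilon=\frac{n(n-2)}{4(n-1)}+\epsilon<\delta$.
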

 \begin{proof} According to the theorem \ref{thm:1.2}, for $k$ satisfying
 $\delta-\sqrt{\delta(\delta-\frac{n-2}n)}<2k<\delta+\sqrt{\delta(\delta-\frac{n-2}n)}$,
 we have with $p=4k+2$
 \begin{equation*}
\begin{aligned}
&\int_{M}(\sqrt Sf)^pdv
\leq C_2\int_M|\nabla f |^pdv, \  \ f\in \mathcal C_c^1(M). \\
\end{aligned}
\end{equation*}
Taking  $f=1$ in  $B_R(p_0)$ and $f=0$ in $M \setminus B_{2R}(p_0)$ and $0\leq f\leq 1$ in $M$
and $|\nabla f|\leq \frac2R$,  we infer
\begin{equation}\label{eq:1.3}
\begin{aligned}
&\int_{B_R(p_0)}(\sqrt S)^pdv
\leq C_2\int_{B_{2R}(p_0)}(\dfrac{2}{R})^pdv\leq C_22^p\Lambda R^{n-p}. \\
\end{aligned}
\end{equation}
Since $\delta>\frac{n(n-2)}{4(n-1)}$, for this fixed $\delta$, there exists a sufficiently small $\epsilon$
such that  $\delta>\frac{n(n-2)}{4(n-1)}+\epsilon$.
We take $2k=\frac{n(n-2)}{4(n-1)}+\epsilon+\sqrt{(\frac{n(n-2)}{4(n-1)}+\epsilon)(\frac{n(n-2)}{4(n-1)}+\epsilon-\frac{n-2}n)}$. We derive
$$
\begin{aligned}
&2k>\frac{n(n-2)}{4(n-1)}+\sqrt{\frac{n(n-2)}{4(n-1)}(\frac{n(n-2)}{4(n-1)}-\frac{n-2}n)}\\
&=\frac{(n-2)}2\\
\end{aligned}
$$
Hence, we know $p=4k+2>n$. From (\ref{eq:1.3}), we know $S\equiv0$ since $R$ is arbitrary, that is,
$X:M^n\to \mathbf R^{n+1}$ is a hyperplane.
 \end{proof}

 \noindent
 For $n=3$, by making use of the conformal transformation,
 Catino,  Mari, Mastrolia and Roncoroni \cite{cmmr} have  proved that a $3$-dimensional
 complete two-sided $\frac13$-stable minimal hypersurface in $\mathbf R^4$ has one end or  is a catenoid.
 By making  use of the  method of the moving frame, introducing the ${\rm Bi}_{(\alpha, \beta)}{\rm Ric}$ curvature
 and essentially in the frame of  the strategy of Chodoch and Li \cite{cl3}, we prove the following
 \begin{theorem}
 For $3\leq n\leq 5$ and $\delta>\delta_1(n)$, an $n$-dimensional complete
 two-sided $\delta$-stable minimal hypersurface $X:M^n\to \mathbf R^{n+1}$  in $\mathbf R^{n+1}$ is a hyperplane,
 where $\delta_1(3)=3/8$, $\delta_1(4)=2/3$ and
 $\delta_1(5)=21/22$.
 \end{theorem}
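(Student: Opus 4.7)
The plan is to reduce the theorem to the preceding Corollary by establishing Euclidean volume growth $\text{vol}(B_R(p_0))\le \Lambda R^n$ for large $R$ under the hypothesis $\delta>\delta_1(n)$. The constants $\delta_1(n)$ are chosen so that $\delta_1(n)\ge n(n-2)/[4(n-1)]$ for $n=3,4,5$, with equality for $n=3,4$; hence once Euclidean volume growth is in hand, the Corollary immediately yields that $M^n$ is a hyperplane.

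To obtain Euclidean volume growth I would adapt the second strategy of Chodosh and Li \cite{cl3}, using the two-parameter curvature quantity ${\rm Bi}_{(\alpha,\beta)}{\rm Ric}$ advertised in the introduction. In the moving frame, I would work with the schematic form
\[
{\rm Bi}_{(\alpha,\beta)}{\rm Ric}(e_1,e_2):=\alpha\,\Ric(e_1,e_1)+\beta\,\Ric(e_2,e_2)-K(e_1,e_2),
\]
where $K$ denotes the sectional curvature of the plane spanned by $e_1,e_2$. A direct Gauss-equation computation on a minimal hypersurface, diagonalising $A_M$, shows ${\rm Bi}_{(\alpha,\beta)}{\rm Ric}\ge -c(n,\alpha,\beta)\,S$ with an explicit coefficient $c$, which can be made sharp by tuning $(\alpha,\beta)$. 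The two free parameters interpolate between the classical bi-Ricci curvature used in Chodosh--Li--Minter--Stryker \cite{clms} and the weighted bi-Ricci of Mazet \cite{m}, and are exactly what is needed to absorb the weaker $\delta$-stability assumption.

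The main steps I would carry out are the following. First, combine the Simons identity $\Delta S=2|\nabla A|^2-2S^2$, the refined Kato inequality $|\nabla A|^2\ge (1+2/n)|\nabla|A||^2$ for minimal hypersurfaces in $\mathbf R^{n+1}$, and $\delta$-stability against the test function $\varphi=S^k\eta$ to produce the integral inequalities of Theorem \ref{thm:1.2}, which provide $L^{2k+1}$ control of $S$ by $|\nabla\eta|$. Second, choose $(\alpha,\beta,k)$ compatibly so that the deficit $c(n,\alpha,\beta)\,S$ in the ${\rm Bi}_{(\alpha,\beta)}{\rm Ric}$ lower bound is absorbed by the quadratic form arising from $\delta$-stability in the admissible range $\delta-\sqrt{\delta(\delta-(n-2)/n)}<2k<\delta+\sqrt{\delta(\delta-(n-2)/n)}$ given by Theorem \ref{thm:1.2}. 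Third, run the $\mu$-bubble and conformal-rescaling argument on an exhausting sequence of geodesic annuli, converting the ${\rm Bi}_{(\alpha,\beta)}{\rm Ric}$ positivity into a linear control of the intrinsic geodesic radius by the extrinsic Euclidean radius, as in \cite{cl3, clms, m}. Finally, integrate the resulting slice-area estimates along the foliation to conclude $\text{vol}(B_R(p_0))\le \Lambda R^n$ and invoke the Corollary.

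The main obstacle is the second step: the quadruple $(n,\delta,k,\alpha,\beta)$ must be optimised so that a single algebraic inequality produces both the pointwise bound on ${\rm Bi}_{(\alpha,\beta)}{\rm Ric}$ and the quadratic form required by the $\mu$-bubble construction, while remaining compatible with $\delta$-stability. The numerical values $\delta_1(3)=3/8,\ \delta_1(4)=2/3,\ \delta_1(5)=21/22$ are exactly what this joint optimisation yields; the discontinuous jump at $n=5$ corresponds to the transition from an essentially unweighted to a genuinely weighted bi-Ricci estimate, which costs an additional margin in $\delta$ and, as in the stable case, is what prevents the method from reaching $n=6$.
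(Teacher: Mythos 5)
Your proposal follows essentially the same route as the paper: Euclidean volume growth is obtained via the Gulliver--Lawson conformal change, the $(\alpha,\beta)$-bi-Ricci estimate and a warped $\mu$-bubble argument (the paper's Theorem 4.2, which in fact already holds for the weaker threshold $\delta>\delta_0(n)$), and the conclusion then follows from the corollary to Theorem \ref{thm:1.2}, whose requirement $\delta>\frac{n(n-2)}{4(n-1)}$ is exactly what forces $\delta_1(3)=3/8$ and $\delta_1(4)=2/3$, while the volume-growth threshold $21/22$ is the binding one for $n=5$. This is precisely the paper's (two-line) proof of the theorem, which combines those two ingredients.
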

\begin{remark}  For $n=3, \ 4$ the results in the above theorem has been proved by Hong, Li and Wang in \cite{hlw}.
Furthermore, since stable is $\delta$-stable,  we reproved the stable Bernstein problem for $3\leq n\leq 5$.
\end{remark}

\noindent
{\bf Acknowlogement}. Authors would like to thank professor Li Haizhong and professors Hong Han and Wang Gaoming
for helpful discussions.

 \vskip3mm

 \vskip3mm

\section{Preliminary}
\vskip2mm
\noindent
Let $X:M^n\to \mathbf R^{n+1}$ be an $n$-dimensional
hypersurface in $\mathbf R^{n+1}$. By a parallel translation,   we can assume
$X(p)=O$ for some $p\in M$. We choose a local orthonormal frame
$\{\vec{e}_1, \cdots, \vec{e}_{n}, \vec{e}_{n+1}\}$ and the dual
coframe $\{\omega_1, \cdots,$ $\omega_n$, $ \omega_{n+1}\}$ in
such  that $\{\vec{e}_1, \cdots, \vec{e}_n\}$  is a local
orthonormal frame on $M^n$. Thus, the induced metric $g$ of
 $X:M^n\to \mathbf R^{n+1}$ is given by $g=\sum_{i=1}^n\omega_i^2$.
Hence, we have
$$
\omega_{n+1}=0
$$
in $M^n$. According to Cartan lemma, one has
$$
\omega_{i,n+1}=\sum_{j}h_{ij}\omega_j, \ h_{ij}=h_{ji}.
$$
The mean curvature $H$ and the second fundamental form
$A_M$ of $M^n$ are defined, respectively, by
$$
H=\sum_{i}h_{ii}, \
A_M=\sum_{i,j}h_{ij}\omega_i\otimes\omega_j\vec{e}_{n+1}.
$$
If   $H\equiv 0$,
 $X:M^n\to \mathbf R^{n+1}$ is  called a  minimal hypersurface.
From the structure equations of $M^n$,  we have Gauss equations,
and Codazzi equations.
\begin{align*}
R_{ijkl}=(h_{ik}h_{jl}-h_{il}h_{jk}),
\end{align*}
\begin{align*}
h_{ijk}=h_{ikj},
\end{align*}
where $h_{ijk}$ are defined by
\begin{equation}
\sum_{k}h_{ijk}\omega_k=dh_{ij}+\sum_k
h_{ik}\omega_{kj} +\sum_k h_{kj}\omega_{ki}.
\end{equation}
Defining $h_{ijkl}$, $h_{ijklm}$  by
\begin{equation}
\sum_{l}h_{ijkl}\omega_l=dh_{ijk}+\sum_l
h_{ijl}\omega_{lk} +\sum_l h_{ilk}\omega_{lj}+\sum_m h_{ljk}\omega_{li},
\end{equation}
\begin{equation}
\begin{aligned}
&\sum_{m}h_{ijklm}\omega_m\\&=dh_{ijkl}+\sum_m
h_{ijkm}\omega_{ml} +\sum_m h_{ijml}\omega_{mk}+\sum_m h_{imkl}\omega_{mj} +\sum_m h_{mjkl}\omega_{mi},
\end{aligned}
\end{equation}
we have
\begin{align*}
h_{ijkl}-h_{ijlk}=\sum_ph_{im}R_{mjkl}+\sum_mh_{mj}R_{mikl},
\end{align*}
\begin{align*}
h_{ijklm}-h_{ijkml}=\sum_ph_{ijp}R_{pklm}+\sum_ph_{ipk}R_{pjlm}+\sum_ph_{pjk}R_{pilm}.
\end{align*}
Let $r$ denote the distance function from the origin $O$ to $X$, $r=|X|$. We consider
Gulliver-Lawson conformal metric $\tilde g=r^{-2}g$. We have
$\tilde g=\sum_{i=1}^n\tilde \omega_i^2$ with $\tilde \omega_i=r^{-1}\omega_i$.
For hypersurface $(N, \tilde g)$ with $N=M\setminus X^{-1}(O)$, we have  connection $\tilde \omega_{ij}$
with respect to  $\tilde g$
$$
\tilde \omega_{ij}=\omega_{ij}+r_j\tilde \omega_i-r_i\tilde \omega_j
$$
with $dr=\sum_ir_i\omega_i$. Thus, we obtain the curvature tensor $\tilde R_{ijkl}$ with respect to $\tilde g$
\begin{equation}\label{eq2.4}
\begin{aligned}
\tilde R_{ijkl}&=r^2R_{ijkl}-|dr|^2(\delta_{ik}\delta_{jl}-\delta_{il}\delta_{jk})
-r(r_{jk}\delta_{il}-r_{ik}\delta_{jl}-r_{jl}\delta_{ik}+r_{il}\delta_{jk}),
\end{aligned}
\end{equation}
where $r_{ij}$ is defined  by $\sum_jr_{ij}\omega_j=dr_i+\sum_jr_j\omega_{ji}$.\\
From $r^2=\langle X, X\rangle$, we have
\begin{equation}
rr_i=\langle X, \vec e_i\rangle,  \  rr_{ij}=\delta_{ij}-r_ir_j+h_{ij}\langle X, \vec e_{n+1}\rangle.
\end{equation}
From  (\ref{eq2.4}), we infer
\begin{lemma}
\begin{equation}
\begin{aligned}
\tilde R_{ijkl}&=r^2R_{ijkl}+(2-|dr|^2)(\delta_{ik}\delta_{jl}-\delta_{il}\delta_{jk})\\
&-\langle X, \vec e_{n+1}\rangle (h_{jk}\delta_{il}-h_{ik}\delta_{jl}-h_{jl}\delta_{ik}+h_{il}\delta_{jk})\\
&+(r_{j}r_{k}\delta_{il}-r_{i}r_{k}\delta_{jl}-r_{j}r_{l}\delta_{ik}+r_{i}r_l\delta_{jk}).
\end{aligned}
\end{equation}
\end{lemma}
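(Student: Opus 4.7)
The proof of the lemma is essentially an algebraic substitution starting from formula (2.4) and using the Hessian identity in (2.5). The plan is to solve (2.5) for $-r\,r_{ij}$ in the form
$$
-r\,r_{ij} = -\delta_{ij} + r_i r_j - h_{ij}\langle X, \vec e_{n+1}\rangle,
$$
and then substitute this into the last bracket $-r(r_{jk}\delta_{il}-r_{ik}\delta_{jl}-r_{jl}\delta_{ik}+r_{il}\delta_{jk})$ of (2.4). The computation produces twelve monomials which separate naturally into three groups: Kronecker-only terms, pure gradient terms involving products $r_\cdot r_\cdot$, and second-fundamental-form terms each carrying a factor $\langle X, \vec e_{n+1}\rangle$.

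Next, I would collect each group. Writing out the four contributions $-r\,r_{jk}\delta_{il}$, $+r\,r_{ik}\delta_{jl}$, $+r\,r_{jl}\delta_{ik}$, $-r\,r_{il}\delta_{jk}$ after substitution, the Kronecker-only pieces are $-\delta_{jk}\delta_{il}+\delta_{ik}\delta_{jl}+\delta_{jl}\delta_{ik}-\delta_{il}\delta_{jk}$, which collapses to $2(\delta_{ik}\delta_{jl}-\delta_{il}\delta_{jk})$. Adding this to the existing $-|dr|^2(\delta_{ik}\delta_{jl}-\delta_{il}\delta_{jk})$ from (2.4) yields the coefficient $(2-|dr|^2)$ asserted in the lemma. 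The $r_\cdot r_\cdot$ pieces reassemble directly as $r_j r_k\delta_{il}-r_i r_k\delta_{jl}-r_j r_l\delta_{ik}+r_i r_l\delta_{jk}$, matching the last line of the claim. The $h_{\cdot\cdot}$ pieces yield $\langle X,\vec e_{n+1}\rangle(-h_{jk}\delta_{il}+h_{ik}\delta_{jl}+h_{jl}\delta_{ik}-h_{il}\delta_{jk})$, which is precisely $-\langle X,\vec e_{n+1}\rangle(h_{jk}\delta_{il}-h_{ik}\delta_{jl}-h_{jl}\delta_{ik}+h_{il}\delta_{jk})$.

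The only care required is a sign bookkeeping in the alternator: since the bracket in (2.4) is antisymmetric in $(i,j)$ and in $(k,l)$, each of the three groups inherits that symmetry automatically, so no terms cancel unexpectedly, and in particular the numerical $+2$ in $(2-|dr|^2)$ arises only from the symmetric pairing of two $-\delta_{ab}$ contributions. I do not anticipate any genuine obstacle; the argument is pure bookkeeping, follows in a few lines once (2.5) is inserted into (2.4), and requires no further ingredient beyond the conformal connection formula $\tilde\omega_{ij}=\omega_{ij}+r_j\tilde\omega_i-r_i\tilde\omega_j$ and the elementary identity $rr_{ij}=\delta_{ij}-r_ir_j+h_{ij}\langle X,\vec e_{n+1}\rangle$ already established in the preliminary section.
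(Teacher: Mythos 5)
Your proposal is correct and coincides with the paper's own argument: the lemma is obtained exactly by substituting the identity $rr_{ij}=\delta_{ij}-r_ir_j+h_{ij}\langle X,\vec e_{n+1}\rangle$ into formula (2.4) and regrouping the resulting terms into the Kronecker, gradient, and second-fundamental-form blocks, just as you describe. The bookkeeping you carry out (in particular the factor $2$ combining with $-|dr|^2$) matches the paper's computation, so there is nothing to add.
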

\noindent
We define  $(\alpha, \beta)$-bi-Ricci curvature ${\rm Bi_{(\alpha,\beta)}Ric}_{12}$ by
\begin{equation}
{\rm Bi_{(\alpha,\beta)}Ric}_{12}=\beta\sum_{i=1}^nR_{1i1i}+\alpha\sum_{j=3}^nR_{2j2j}.
\end{equation}
According to the lemma 2.1, we obtain the relation
between ${\rm Bi}_{(\alpha,\beta)}{\rm Ric}$ and $\widetilde {{\rm Bi}_{(\alpha,\beta)}{\rm Ric}}$
with respect to the metric $g$ and the metric $\tilde g$, respectively.
\begin{lemma}
\begin{equation}
\begin{aligned}
&\widetilde{\rm Bi_{(\alpha,\beta)}Ric}_{12}=r^2{\rm Bi_{(\alpha,\beta)}Ric}_{12}+2(n-1)\beta+2(n-2)\alpha\\
&-(n\beta+(n-1)\alpha)|dr|^2-((n-2)\beta-\alpha)r_1^2-(n-3)\alpha r_2^2\\
&+\langle X, \vec e_{n+1}\rangle \bigl [ ((n-2)\beta-\alpha)h_{11}+(n-3)\alpha h_{22}\bigl ].
\end{aligned}
\end{equation}
\end{lemma}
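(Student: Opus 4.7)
The proof reduces to a direct bookkeeping computation: apply Lemma 2.1 to evaluate each sectional-curvature-like component appearing in the definition
\[
\widetilde{{\rm Bi}_{(\alpha,\beta)}{\rm Ric}}_{12}=\beta\sum_{i=1}^{n}\tilde R_{1i1i}+\alpha\sum_{j=3}^{n}\tilde R_{2j2j},
\]
then sum with the weights $\beta$ and $\alpha$. The plan is to set $(i,j,k,l)=(1,i,1,i)$ and $(2,j,2,j)$ in the formula of Lemma 2.1 and track the four distinct pieces separately: the intrinsic $r^{2}R_{ijkl}$ piece, the $(2-|dr|^{2})$ Kronecker piece, the $\langle X,\vec e_{n+1}\rangle$ second-fundamental-form piece, and the $r_{i}r_{j}$ piece.

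First I would record the termwise contractions. For $i\geq 2$ one finds
\[
\tilde R_{1i1i}=r^{2}R_{1i1i}+(2-|dr|^{2})+\langle X,\vec e_{n+1}\rangle(h_{11}+h_{ii})-(r_{1}^{2}+r_{i}^{2}),
\]
and analogously for $\tilde R_{2j2j}$ with $j\geq 3$. The $i=1$ term in the $\beta$-sum is zero by antisymmetry, so effectively $\sum_{i=1}^{n}=\sum_{i=2}^{n}$.

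Next I would collect each family of terms and simplify using the two defining identities of our setting: the minimality condition $\sum_{i=1}^{n}h_{ii}=0$ and the norm identity $|dr|^{2}=\sum_{i=1}^{n}r_{i}^{2}$. Concretely, the $h$-contributions simplify as
\[
\sum_{i=2}^{n}(h_{11}+h_{ii})=(n-2)h_{11},\qquad \sum_{j=3}^{n}(h_{22}+h_{jj})=(n-3)h_{22}-h_{11},
\]
and the $r$-contributions as
\[
\sum_{i=2}^{n}(r_{1}^{2}+r_{i}^{2})=(n-2)r_{1}^{2}+|dr|^{2},\qquad \sum_{j=3}^{n}(r_{2}^{2}+r_{j}^{2})=(n-3)r_{2}^{2}+|dr|^{2}-r_{1}^{2}.
\]
The Kronecker-type contributions give $2(n-1)-(n-1)|dr|^{2}$ and $2(n-2)-(n-2)|dr|^{2}$ respectively. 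Multiplying by $\beta$ and $\alpha$ and adding yields exactly the claimed identity.

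The only real subtlety — what I would call the main obstacle, though it is entirely combinatorial — is keeping the index ranges aligned: the $\alpha$-sum runs over $j\ge 3$ rather than $j\ge 1$, so when minimality is applied to $\sum_{j=3}^{n}h_{jj}=-h_{11}-h_{22}$, a stray $-h_{11}$ enters the $\alpha$ piece. Combining it with the $(n-2)\beta h_{11}$ term coming from the $\beta$-sum is what produces the coefficient $\bigl((n-2)\beta-\alpha\bigr)$ that appears both on $h_{11}$ and on $r_{1}^{2}$ in the final formula. A similar cross-contribution $+r_{1}^{2}$ from the $\alpha$-sum explains why the coefficient of $r_{1}^{2}$ is $-((n-2)\beta-\alpha)$ rather than $-(n-2)\beta$. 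Once one is careful about this single cross-term, all remaining manipulations are mechanical.
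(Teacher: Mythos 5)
Your proof is correct and is exactly the computation the paper intends: Lemma 2.2 is stated as an immediate consequence of Lemma 2.1, and your termwise contraction, index bookkeeping, and use of $\sum_i h_{ii}=0$ and $|dr|^2=\sum_i r_i^2$ reproduce every coefficient, including the cross-terms that generate $((n-2)\beta-\alpha)$ on both $h_{11}$ and $r_1^2$. The only remark worth adding is that, as you correctly note, the identity as stated uses minimality ($H=0$), an assumption the paper leaves implicit in the lemma.
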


\section{ Key estimates}
\vskip3mm

\begin{lemma} \label{lem3.1} For $\frac{n-1}{n-2}a> \max\{\beta, \alpha\}$ and
$\frac{4n}{n-2}a^2-4(\frac{n-1}{n-2}\beta+\alpha)a +(4\beta-\alpha)\alpha>0$,
$$
\begin{aligned}
f(x, y)&=a\bigl [x^2+y^2+\dfrac{1}{n-2}(x+y)^2\bigl]-\beta x^2-\alpha(xy+y^2)\\
&-E[((n-2)\beta-\alpha)x+(n-3)\alpha y]
\end{aligned}
$$
satisfies
$$
\begin{aligned}
&f(x,y)\geq f_{min}(a, \alpha,\beta)\\
&=E^2\ \bigl\{ \dfrac{(n-2)\alpha^3-\bigl[(n^2-5n+8)a+(3n-7)\beta\bigl]\alpha^2}{\frac{4n}{n-2}a^2
-4(\frac{n-1}{n-2}\beta+\alpha)a +(4\beta-\alpha)\alpha}\\
&+\dfrac{\bigl[(n-2)^2\alpha-(n-1)(n-2)a\bigl]\beta^2+4(n-2)a\alpha\beta}
{\frac{4n}{n-2}a^2-4(\frac{n-1}{n-2}\beta+\alpha)a +(4\beta-\alpha)\alpha}\bigl\},
\end{aligned}
$$
where $E$ does not depend on $x, \ y$.
\end{lemma}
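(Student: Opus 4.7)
The plan is to treat $f$ as a quadratic polynomial in $(x,y)$, identify when it is strictly convex, and compute its global minimum by the first-order optimality conditions. First I would expand and collect terms, writing
\[
f(x,y)=Ax^{2}+Cxy+By^{2}-px-qy,
\]
where
\[
A=\tfrac{(n-1)a}{n-2}-\beta,\qquad B=\tfrac{(n-1)a}{n-2}-\alpha,\qquad C=\tfrac{2a}{n-2}-\alpha,
\]
and
\[
p=E\bigl((n-2)\beta-\alpha\bigr),\qquad q=E(n-3)\alpha.
\]

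Next I would verify that the two hypotheses of the lemma are exactly the conditions for strict convexity of $f$. The assumption $\tfrac{n-1}{n-2}a>\max\{\alpha,\beta\}$ gives $A>0$ and $B>0$. A direct computation shows
\[
4AB-C^{2}=\tfrac{4n}{n-2}a^{2}-4\bigl(\tfrac{n-1}{n-2}\beta+\alpha\bigr)a+(4\beta-\alpha)\alpha,
\]
so the second hypothesis is precisely positivity of the determinant of the Hessian of the quadratic part of $f$. Combined with $A>0$, this makes $f$ strictly convex and coercive, hence it attains a unique global minimum at an interior critical point.

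Solving $\nabla f=0$ yields $(x^{*},y^{*})$ with
\[
x^{*}=\frac{2Bp-Cq}{4AB-C^{2}},\qquad y^{*}=\frac{2Aq-Cp}{4AB-C^{2}}.
\]
Using the Euler-type identity $2(Ax^{*2}+Cx^{*}y^{*}+By^{*2})=px^{*}+qy^{*}$ valid at the critical point, the minimum value collapses to
\[
f_{\min}=-\tfrac12(px^{*}+qy^{*})=-\frac{Bp^{2}-Cpq+Aq^{2}}{4AB-C^{2}}.
\]

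It remains to substitute the explicit values of $A,B,C,p,q$ and identify $-Bp^{2}+Cpq-Aq^{2}$ with the numerator printed in the statement. The main obstacle is this final algebraic reorganisation: one must expand a cubic polynomial in $(a,\alpha,\beta)$ and collect monomials $\alpha^{3}$, $a\alpha^{2}$, $\alpha^{2}\beta$, $a\alpha\beta$, $a\beta^{2}$, and $\alpha\beta^{2}$ to match the stated form. I would organise this by splitting the numerator according to the $\beta$-degree and verifying coefficients one at a time; there is no conceptual subtlety, only disciplined bookkeeping of degree-three terms.
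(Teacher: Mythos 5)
Your proposal is correct and follows essentially the same route as the paper: both identify the hypotheses with positive definiteness of the Hessian of the quadratic form ($A>0$, $B>0$, $4AB-C^{2}>0$ being exactly the stated denominator), solve the first-order conditions for the unique critical point, and evaluate there. Your use of the identity $f_{\min}=-\tfrac12(px^{*}+qy^{*})=-(Bp^{2}-Cpq+Aq^{2})/(4AB-C^{2})$ is a modest computational shortcut over the paper's direct substitution, and the final coefficient bookkeeping does check out against the stated numerator.
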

\begin{proof} From
$$
\begin{aligned}
&f_x=2ax+\dfrac{2a}{n-2}(x+y)-2\beta x-\alpha y-E((n-2)\beta-\alpha)=0,\\
&f_y=2ay+\dfrac{2a}{n-2}(x+y)-2\alpha y-\alpha x-E(n-3)\alpha=0,
\end{aligned}
$$
we have the critical point
$$
\begin{cases}
&x=E\ \dfrac{(n-1)\alpha^2-2((n-2)\beta +2a)\alpha+2(n-1)a\beta}{\frac{4n}{n-2}a^2-4(\frac{n-1}{n-2}\beta+\alpha)a +(4\beta-\alpha)\alpha}, \\ \\
&y=E\ \dfrac{\alpha^2+[(n-4)\beta-2(n-2)a]\alpha+2a\beta}{\frac{4n}{n-2}a^2-4(\frac{n-1}{n-2}\beta+\alpha)a +(4\beta-\alpha)\alpha}.
\end{cases}
$$
Since
$$
\begin{aligned}
&f_{xx}=\dfrac{2(n-1)}{n-2}a-2\beta>0,\\
&f_{yy}=\dfrac{2(n-1)}{n-2}a-2\alpha>0, \\
&f_{xy}=\dfrac{2a}{n-2}-\alpha,
\end{aligned}
$$
and $\frac{4n}{n-2}a^2-4(\frac{n-1}{n-2}\beta+\alpha)a +(4\beta-\alpha)\alpha>0$,
$f(x, y)$ attains  its minimum $f_{min}(a, \alpha,\beta)$ at the critical point
$$
\begin{aligned}
&f_{min}(a, \alpha,\beta)\\
&=E^2\ \bigl\{ \dfrac{(n-2)\alpha^3-\bigl[(n^2-5n+8)a+(3n-7)\beta\bigl]\alpha^2}{\frac{4n}{n-2}a^2
-4(\frac{n-1}{n-2}\beta+\alpha)a +(4\beta-\alpha)\alpha}\\
&+\dfrac{\bigl[(n-2)^2\alpha-(n-1)(n-2)a\bigl]\beta^2+4(n-2)a\alpha\beta}{\frac{4n}{n-2}a^2-4(\frac{n-1}{n-2}\beta+\alpha)a +(4\beta-\alpha)\alpha}\bigl\}.
\end{aligned}
$$
\end{proof}

\begin{lemma}\label{lem3.2} For an $n$-dimensional  minimal hypersurface $X:M^n\to \mathbf R^{n+1}$
in $\mathbf R^{n+1}$, we have
$$
a S\geq - {\rm Bi}_{(\alpha,\beta)}{\rm Ric}_{12}-\dfrac{\langle X,\vec e_{n+1}\rangle}{r^2} \big[((n-2)\beta-\alpha)h_{11}
+(n-3)\alpha h_{22}\big]+f_{\min}(a,\alpha,\beta)
$$
with $E=\dfrac{\langle X,\vec e_{n+1}\rangle}{r^2}$.
\end{lemma}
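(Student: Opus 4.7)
The plan is to expand both sides in terms of the components $h_{ij}$ of the second fundamental form, reduce everything to a quadratic form in $x=h_{11}$, $y=h_{22}$, and then invoke Lemma \ref{lem3.1}.

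First I would use the Gauss equation $R_{ijkl}=h_{ik}h_{jl}-h_{il}h_{jk}$ together with minimality $H=\sum_i h_{ii}=0$ to compute
$$
\sum_{i=1}^n R_{1i1i}=h_{11}H-\sum_{i=1}^n h_{1i}^2=-\sum_{i=1}^n h_{1i}^2,
$$
$$
\sum_{j=3}^n R_{2j2j}=h_{22}\Big(H-h_{11}-h_{22}\Big)-\sum_{j=3}^n h_{2j}^2=-h_{11}h_{22}-h_{22}^2-\sum_{j=3}^n h_{2j}^2,
$$
which gives an explicit expression for ${\rm Bi}_{(\alpha,\beta)}{\rm Ric}_{12}$. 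Next, adding $aS=a\sum_{i,j}h_{ij}^2$ to this, I split the result into the diagonal block (involving $h_{ii}^2$) and off-diagonal block. After careful bookkeeping, the off-diagonal block collects as
$$
(2a-\beta)h_{12}^2+(2a-\beta)\sum_{j\geq 3}h_{1j}^2+(2a-\alpha)\sum_{j\geq 3}h_{2j}^2+2a\sum_{3\leq i<j}h_{ij}^2,
$$
which is nonnegative under the standing hypothesis $\frac{n-1}{n-2}a>\max\{\alpha,\beta\}$ of Lemma \ref{lem3.1}, since $2a\geq \frac{n-1}{n-2}a$ for $n\geq 3$.

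For the diagonal block I would apply Cauchy--Schwarz, $\sum_{i\geq 3}h_{ii}^2\geq \frac{1}{n-2}\bigl(\sum_{i\geq 3}h_{ii}\bigr)^2=\frac{(h_{11}+h_{22})^2}{n-2}$, using $H=0$. This yields the lower bound
$$
aS+{\rm Bi}_{(\alpha,\beta)}{\rm Ric}_{12}\ \geq\ a\Big(h_{11}^2+h_{22}^2+\tfrac{1}{n-2}(h_{11}+h_{22})^2\Big)-\beta h_{11}^2-\alpha(h_{11}h_{22}+h_{22}^2),
$$
whose right-hand side is precisely the quadratic part of the function $f(x,y)$ appearing in Lemma \ref{lem3.1}.

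To finish, I would set $E=\langle X,\vec e_{n+1}\rangle/r^2$ and apply Lemma \ref{lem3.1} at the point $(x,y)=(-h_{11},-h_{22})$. The quadratic part of $f$ is even in $(x,y)$ while the linear part is odd, so $f(-h_{11},-h_{22})\geq f_{\min}$ reads
$$
a\Big(h_{11}^2+h_{22}^2+\tfrac{1}{n-2}(h_{11}+h_{22})^2\Big)-\beta h_{11}^2-\alpha(h_{11}h_{22}+h_{22}^2)\ \geq\ f_{\min}-E\bigl[((n-2)\beta-\alpha)h_{11}+(n-3)\alpha h_{22}\bigr].
$$
Chaining this with the previous inequality and moving ${\rm Bi}_{(\alpha,\beta)}{\rm Ric}_{12}$ to the right-hand side produces the claimed bound. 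The only subtle point is the sign of the linear term: one must feed $(-h_{11},-h_{22})$ (not $(h_{11},h_{22})$) into Lemma \ref{lem3.1} to land on the $-E[\cdot]$ form stated in the conclusion. The remaining work is routine index bookkeeping, so I do not anticipate a genuine obstacle.
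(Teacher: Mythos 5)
Your proof is correct, and its core is the same as the paper's: bound $a\sum_i h_{ii}^2$ from below by $a\bigl(h_{11}^2+h_{22}^2+\frac{1}{n-2}(h_{11}+h_{22})^2\bigr)$ via Cauchy--Schwarz on the traceless diagonal part, then feed the resulting two-variable quadratic into Lemma \ref{lem3.1}; your sign bookkeeping (evaluating $f$ at $(-h_{11},-h_{22})$ with $E=\langle X,\vec e_{n+1}\rangle/r^2$) is equivalent to the paper's choice $E=-\langle X,\vec e_{n+1}\rangle/r^2$, since $f_{\min}$ depends only on $E^2$. The one genuine difference is that the paper diagonalizes the second fundamental form, $h_{ij}=\lambda_i\delta_{ij}$, so the off-diagonal terms never appear, whereas you work in an arbitrary orthonormal frame and must check that the off-diagonal block $(2a-\beta)h_{12}^2+(2a-\beta)\sum_{j\ge 3}h_{1j}^2+(2a-\alpha)\sum_{j\ge 3}h_{2j}^2+2a\sum_{3\le i<j}h_{ij}^2$ is nonnegative, which holds since $2a\ge\frac{n-1}{n-2}a>\max\{\alpha,\beta\}$ for $n\ge 3$ and $a>0$. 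This extra step actually buys something: the inequality is later applied (in Theorem 3.1) in the frame that minimizes $\widetilde{{\rm Bi}_{(\alpha,\beta)}{\rm Ric}}_{12}$, which need not be a principal frame, so the frame-independent version you prove is the one really needed, while the paper's diagonalized argument covers only the principal frame and leaves that reduction implicit. The only caveat in your write-up is the unstated assumption $a\ge 0$ when asserting $2a\ge\frac{n-1}{n-2}a$; this is harmless since $a>0$ in every application, but it deserves a word.
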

\begin{proof}
At each point $p$, we choose  a local orthonormal frame
$\{\vec{e}_1, \cdots, \vec{e}_{n}\}$  such that $h_{ij}=\lambda_i\delta_{ij}$.
We know
$$
\sum_i\lambda_i=0,  \ S=\sum_i\lambda_i^2, \ {\rm Bi}_{(\alpha,\beta)}{\rm Ric}_{12}
=-\beta\lambda_1^2-\alpha(\lambda_1\lambda_2+\lambda_2^2).
$$
Thus,
$$
\begin{aligned}
&a S+ {\rm Bi}_{(\alpha,\beta)}{\rm Ric}_{12}
+\dfrac{\langle X,\vec e_{n+1}\rangle}{r^2} \big[((n-2)\beta-\alpha)h_{11}+(n-3)\alpha h_{22}\big]\\
&\geq a\bigl[\lambda_1^2+\lambda_2^2+\dfrac{1}{n-2}(\lambda_1+\lambda_2)^2\bigl]
-\beta\lambda_1^2-\alpha(\lambda_1\lambda_2+\lambda_2^2)\\
&+\dfrac{\langle X,\vec e_{n+1}\rangle}{r^2} \big[((n-2)\beta-\alpha)\lambda_1+(n-3)\alpha \lambda_2\big].
\end{aligned}
$$
From the lemma \ref{lem3.1} and putting $E=-\dfrac{\langle X,\vec e_{n+1}\rangle}{r^2}$, we finish the  proof of the  lemma.
\end{proof}

\begin{lemma}\label{lem3.3} For $a=b\delta_0(n)$ with $\delta>\delta_0(n)$,
\begin{equation}
\begin{aligned}
&b\bigl(-\dfrac{n(n-2)}2+\dfrac{n^2-4}4|dr|^2\bigl)+r^2f_{\min}(a, \alpha,\beta)
+2(n-1)\beta+2(n-2)\alpha\\
&-(n\beta+(n-1)\alpha)|dr|^2-((n-2)\beta-\alpha)r_1^2-(n-3)\alpha r_2^2\\
&\geq \varepsilon(n)>0,\\
\end{aligned}
\end{equation}
where
\begin{equation}\label{eq:3.2}
\begin{aligned}
&\delta_0 (3)=\frac13, \ \delta_0 (4)=\frac12,  \  \ \delta_0(5)=\frac{21}{22},\\
&\varepsilon(3)=\frac9{11}, \ \varepsilon(4)=\frac{377}{5260},  \  \ \varepsilon(5)=\frac{979826999}{65363627000}\approx 0.014999.
\end{aligned}
\end{equation}
\end{lemma}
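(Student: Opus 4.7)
The plan is to reduce the claimed inequality to a finite–dimensional optimization problem and then verify it by direct computation in each of the three cases $n=3,4,5$. The main geometric input is the identity $r^{2}E^{2}=1-|dr|^{2}$ on $M$, which follows from $r^{2}=|X|^{2}=\sum_{A=1}^{n+1}\langle X,\vec e_{A}\rangle^{2}$ together with $rr_{i}=\langle X,\vec e_{i}\rangle$ (the formula displayed just before Lemma~2.1). Consequently $r^{2}f_{\min}(a,\alpha,\beta)=(1-|dr|^{2})\,K_{n}(a,\alpha,\beta)$, where $K_{n}(a,\alpha,\beta)$ is the bracketed constant in Lemma~\ref{lem3.1}. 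After this substitution, the entire left–hand side of the claimed inequality becomes an affine function of the three nonnegative variables $|dr|^{2}$, $r_{1}^{2}$, $r_{2}^{2}$ whose coefficients depend only on $n,a,\alpha,\beta$.

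Next I would note the obvious geometric constraints $0\le r_{1}^{2},\,r_{2}^{2}$, $r_{1}^{2}+r_{2}^{2}\le |dr|^{2}\le 1$. Since the function to be minimized is affine, it attains its minimum on this compact polytope at one of finitely many vertices, so the problem reduces to computing the value at each vertex and taking the smallest. Writing $a=b\delta_{0}(n)$ and grouping the $|dr|^{2}$, $r_{1}^{2}$, $r_{2}^{2}$ and constant contributions, the inequality to prove is equivalent to a list of linear inequalities in $b,\alpha,\beta$ (one per vertex).

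I would then treat each dimension $n\in\{3,4,5\}$ separately, plugging in the specific $\delta_{0}(n)$ and choosing the accompanying parameters $(b,\alpha,\beta)$ that will be used when Lemma~\ref{lem3.3} is invoked to bound $\mathrm{Bi}_{(\alpha,\beta)}\mathrm{Ric}$ in the subsequent argument. For these specific parameters one computes $K_{n}(a,\alpha,\beta)$ in closed form using the formula from Lemma~\ref{lem3.1} and compares its value with the linear coefficients; the output is precisely the $\varepsilon(n)$ displayed in (\ref{eq:3.2}). In practice this is an algebra exercise: expand everything, replace $r^{2}E^{2}$ by $1-|dr|^{2}$, and minimize the resulting affine form.

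The main obstacle is numerical sharpness. For $n=5$ the asserted lower bound $\varepsilon(5)=979826999/65363627000\approx 0.015$ is tiny, which means the parameters $(b,\alpha,\beta)$ at $\delta_{0}(5)=21/22$ must be chosen essentially optimally; even a slight deviation makes the minimum negative. Thus the bulk of the work is to identify the correct tuple $(b,\alpha,\beta)$ for each $n$ (presumably obtained by solving the critical–point equations for the vertex–minimum as a function of these parameters) and then to carry out the exact rational arithmetic that yields the stated constants. Positivity of $\varepsilon(n)$ for the chosen configuration is the only substantive conclusion, but it rests on this careful bookkeeping.
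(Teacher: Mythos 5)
Your plan coincides with the paper's proof: using $r^{2}E^{2}=1-|dr|^{2}$ the left-hand side becomes affine in $|dr|^{2},r_{1}^{2},r_{2}^{2}$ (the paper absorbs the $r_{1}^{2},r_{2}^{2}$ terms via $\max\{(n-2)\beta-\alpha,(n-3)\alpha\}|dr|^{2}$, which is your vertex reduction), so the minimum is $\min\{F(\cdot,0),F(\cdot,1)\}$, evaluated by exact rational arithmetic at explicit parameters. The only piece you leave open — the tuples $(b,\alpha,\beta)$ — is supplied in the paper's equation (3.5), e.g.\ $b=\tfrac{20}{21},\ \alpha=\tfrac{31}{40},\ \beta=\tfrac{207}{250}$ for $n=5$; otherwise the arguments are the same.
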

\begin{proof}
Since $r^2E^2=\frac{\langle X, \vec e_{n+1}\rangle ^2}{r^2}=1-|dr|^2$,  we consider
\begin{equation}
\begin{aligned}
&F(n, b,  \alpha,  \beta, |dr|^2):=2(n-1)\beta+2(n-2)\alpha-b\dfrac{n(n-2)}2\\
&+\bigl[\dfrac{n^2-4}4b-(n\beta+(n-1)\alpha)-\max\{((n-2)\beta-\alpha), (n-3)\alpha\}\bigl]|dr|^2\\
&+(1-|dr|^2) \bigl\{ \dfrac{(n-2)\alpha^3-\bigl[(n^2-5n+8)a+(3n-7)\beta\bigl]\alpha^2}{\frac{4n}{n-2}a^2
-4(\frac{n-1}{n-2}\beta+\alpha)a +(4\beta-\alpha)\alpha}\\
&+\dfrac{\bigl[(n-2)^2\alpha-(n-1)(n-2)a\bigl]\beta^2+4(n-2)a\alpha\beta}
{\frac{4n}{n-2}a^2-4(\frac{n-1}{n-2}\beta+\alpha)a +(4\beta-\alpha)\alpha}\bigl\}.\\
\end{aligned}
\end{equation}
Thus, we know that $F(n, b,  \alpha,  \beta, |dr|^2)$ is linear on $|dr|^2$ and
\begin{equation}
\begin{aligned}
&F(n, b, \alpha, \beta, 0)=2(n-1)\beta+2(n-2)\alpha-b\dfrac{n(n-2)}2\\
&+\bigl\{ \dfrac{(n-2)\alpha^3-\bigl[(n^2-5n+8)a+(3n-7)\beta\bigl]\alpha^2}{\frac{4n}{n-2}a^2
-4(\frac{n-1}{n-2}\beta+\alpha)a +(4\beta-\alpha)\alpha}\\
&+\dfrac{\bigl[(n-2)^2\alpha-(n-1)(n-2)a\bigl]\beta^2+4(n-2)a\alpha\beta}
{\frac{4n}{n-2}a^2-4(\frac{n-1}{n-2}\beta+\alpha)a +(4\beta-\alpha)\alpha}\bigl\},\\
\end{aligned}
\end{equation}
\begin{equation}
\begin{aligned}
&F(n, b, \alpha, \beta, 1)=2(n-1)\beta+2(n-2)\alpha-b\dfrac{n(n-2)}2\\
&+\bigl[\dfrac{n^2-4}4b-(n\beta+(n-1)\alpha)-\max\{((n-2)\beta-\alpha), (n-3)\alpha\}\bigl].\\
\end{aligned}
\end{equation}
Taking
\begin{equation}\label{eq:3.5}
\begin{cases}
&a= \dfrac{10}{11}, \ b=\dfrac{30}{11}, \ \alpha=\dfrac{18}{11}, \  \beta=\dfrac32, \  \text{\rm when}\  n=3,\\ \\
&a=\dfrac{24}{25}, \  b=\dfrac{48}{25},  \  \alpha=\dfrac{51}{50}, \ \beta=\dfrac{5}4,  \  \text{\rm when} \  n=4, \\ \\
&a=\dfrac{10}{11}, \  b=\dfrac{20}{21},  \  \alpha=\dfrac{31}{40}, \ \beta=\dfrac{207}{250},  \  \text{\rm when} \  n=5. \\
\end{cases}
\end{equation}
we have
\begin{equation}
\begin{aligned}
&b\bigl(-\dfrac{n(n-2)}2+\dfrac{n^2-4}4|dr|^2\bigl)+r^2f_{\min}(a, \alpha,\beta)
+2(n-1)\beta+2(n-2)\alpha\\
&-(n\beta+(n-1)\alpha)|dr|^2-((n-2)\beta-\alpha)r_1^2-(n-3)\alpha r_2^2\\
&\geq \varepsilon(n)=\min\{F(n, b, \alpha, \beta, 0), F(n, b, \alpha, \beta, 1)\}>0.\\
\end{aligned}
\end{equation}
Hence, we have
\begin{equation*}
\varepsilon(3)=\frac9{11}, \ \varepsilon(4)=\frac{377}{5260},  \  \ \varepsilon(5)=\frac{979826999}{65363627000}\approx 0.014999.
\end{equation*}
\end{proof}

\begin{theorem}
 For an $n$-dimensional complete
 two-sided $\delta$-stable minimal hypersurface $X:M^n\to \mathbf R^{n+1}$  in $\mathbf R^{n+1}$, with $3\leq n\leq 5$
 and $\delta>\delta_0(n)$,
 there exist an $\varepsilon(n)>0$ and smooth function $V$ such that
 $V\geq\varepsilon(n) -\tilde\Lambda_{(\alpha,\beta)}$ and
 $$
 b\int_N|\tilde \nabla\varphi |^2_{\tilde g}dv_{\tilde g}\geq \int_NV\varphi^2dv_{\tilde g}, \  \text {\rm for } \ \varphi \in \mathcal C^{1}_{c}(N),
 $$
 where $\tilde\Lambda_{(\alpha,\beta)}$ is the minimum of the  ${(\alpha,\beta)}$-bi-Ricci curvature
 of  $N=M\setminus X^{-1}(O)$ at each point.
\end{theorem}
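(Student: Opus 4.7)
The plan is to transfer the $\delta$-stability inequality on $(M,g)$ to a spectral inequality on $(N,\tilde g)$ via the Gulliver--Lawson conformal change $\tilde g = r^{-2} g$, and then feed it into the pointwise estimates of Lemmas~\ref{lem3.2} and~\ref{lem3.3}. First I would insert the test function $\psi = \varphi \cdot r^{-(n-2)/2}$ (for $\varphi \in \mathcal C^{1}_{c}(N)$) into the $\delta$-stability inequality on $M$, expand $|\nabla\psi|^{2}$, and convert each piece to $(N,\tilde g)$ using $dv_{\tilde g}=r^{-n}dv$, $|\tilde\nabla\varphi|^{2}_{\tilde g}=r^{2}|\nabla\varphi|^{2}$, and $|\tilde\nabla\log r|^{2}_{\tilde g}=|dr|^{2}$. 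Integrating the cross term by parts introduces $\tilde\Delta\log r$, which I compute from the conformal Laplacian formula $\tilde\Delta f = r^{2}\Delta f -(n-2)r\langle\nabla r,\nabla f\rangle$ together with $\Delta r = (n-|dr|^{2})/r$ (itself a consequence of $rr_{ij}=\delta_{ij}-r_{i}r_{j}+h_{ij}\langle X,\vec e_{n+1}\rangle$ and $H=0$), obtaining $\tilde\Delta\log r = n(1-|dr|^{2})$. Collecting terms puts the $\delta$-stability inequality in the form
\begin{equation*}
\delta\int_{N}r^{2}S\,\varphi^{2}\,dv_{\tilde g} \leq \int_{N}|\tilde\nabla\varphi|^{2}\,dv_{\tilde g} + \int_{N}\varphi^{2}\Bigl[\tfrac{n(n-2)}{2}-\tfrac{n^{2}-4}{4}|dr|^{2}\Bigr]dv_{\tilde g}.
\end{equation*}

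Next I would multiply Lemma~\ref{lem3.2} by $r^{2}$ and use Lemma~2.2 to replace $r^{2}{\rm Bi}_{(\alpha,\beta)}{\rm Ric}_{12}$ by $\widetilde{{\rm Bi}_{(\alpha,\beta)}{\rm Ric}}_{12}$ plus the curvature and radial terms. The two $\langle X,\vec e_{n+1}\rangle$-terms involving $h_{11},h_{22}$ cancel exactly, yielding the pointwise estimate
\begin{equation*}
ar^{2}S \geq -\widetilde{{\rm Bi}_{(\alpha,\beta)}{\rm Ric}}_{12} + 2(n-1)\beta + 2(n-2)\alpha -(n\beta+(n-1)\alpha)|dr|^{2} -((n-2)\beta-\alpha)r_{1}^{2}-(n-3)\alpha r_{2}^{2} + r^{2}f_{\min}(a,\alpha,\beta).
\end{equation*}

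Combining the two: since $a=b\delta_{0}(n)$ and $\delta>\delta_{0}(n)$ give $a/\delta<b$, while $|dr|^{2}\leq 1$ makes the bracket $\tfrac{n(n-2)}{2}-\tfrac{n^{2}-4}{4}|dr|^{2}\geq\tfrac{(n-2)^{2}}{4}\geq 0$ non-negative, dividing the displayed stability inequality by $\delta$ and multiplying by $a$ upper-bounds $a\int_{N}r^{2}S\varphi^{2}dv_{\tilde g}$ by $b\int_N|\tilde\nabla\varphi|^{2}dv_{\tilde g}+b\int_N\varphi^{2}[\tfrac{n(n-2)}{2}-\tfrac{n^{2}-4}{4}|dr|^{2}]dv_{\tilde g}$. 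Integrating the pointwise inequality against $\varphi^{2}\,dv_{\tilde g}$ and chaining gives
\begin{equation*}
b\int_{N}|\tilde\nabla\varphi|^{2}dv_{\tilde g} \geq \int_{N}\Bigl[-\widetilde{{\rm Bi}_{(\alpha,\beta)}{\rm Ric}}_{12}+\Xi\Bigr]\varphi^{2}dv_{\tilde g},
\end{equation*}
where $\Xi$ is exactly the expression whose positivity is established by Lemma~\ref{lem3.3}, so $\Xi\geq\varepsilon(n)$. Choosing at each point the orthonormal eigenframe of the shape operator and the ordering of its eigenvectors that minimizes $\widetilde{{\rm Bi}_{(\alpha,\beta)}{\rm Ric}}_{12}$ then allows me to set $V:=\varepsilon(n)-\tilde\Lambda_{(\alpha,\beta)}$ (mollified if smoothness is required), which satisfies both $V\geq\varepsilon(n)-\tilde\Lambda_{(\alpha,\beta)}$ and the claimed Poincar\'e-type inequality.

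The main obstacle is the alignment of constants in the combining step: the specific choice $a=b\delta_{0}(n)$ and the exact conformal correction $\tfrac{n(n-2)}{2}-\tfrac{n^{2}-4}{4}|dr|^{2}$ arising from $\tilde\Delta\log r$ must dovetail precisely with the lower-order terms of Lemma~\ref{lem3.3} so that $\varepsilon(n)>0$; this is ultimately what pins down the threshold $\delta_{0}(n)$ and the parameter choices in~(\ref{eq:3.5}). A secondary technical point is to verify that the minimum of the $(\alpha,\beta)$-bi-Ricci curvature over all 2-planes is realized on one spanned by eigenvectors of the second fundamental form, so that the eigenframe computation of Lemma~\ref{lem3.2} genuinely produces the bound in terms of $\tilde\Lambda_{(\alpha,\beta)}$ rather than an eigen-restricted version.
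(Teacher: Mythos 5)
Your proposal follows essentially the same route as the paper's proof: substituting $f=r^{(2-n)/2}\varphi$ into the stability inequality transported to the Gulliver--Lawson metric, using $\tilde\Delta\log r=n-n|dr|^2$ to obtain the conformal correction $\tfrac{n(n-2)}{2}-\tfrac{n^2-4}{4}|dr|^2$, and then combining Lemma \ref{lem3.2} with Lemma 2.2 and Lemma \ref{lem3.3} via $a=b\delta_0(n)\leq b\delta$ to produce $V$ with $V\geq\varepsilon(n)-\tilde\Lambda_{(\alpha,\beta)}$. The eigenframe-versus-minimizing-frame subtlety you flag at the end is a genuine point, but it is likewise left implicit in the paper's own argument, so your write-up matches the published proof in both structure and level of detail.
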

\begin{proof}
Since $\tilde \omega_i=r^{-1}\omega_i$, we know
\begin{equation}\label{eq:3.8}
dv_{\tilde g}=r^{-n}dv_g, \ |\tilde \nabla f|_{\tilde g}^2 =r^2|\nabla f|_g^2.
\end{equation}
For $f\in \mathcal C^{1}_{c}(N)$,
\begin{equation}\label{eq:3.9}
\begin{aligned}
&\int_Nr^{n-2}|\tilde \nabla f |^2_{\tilde g}dv_{\tilde g}=\int_M|\nabla f |^2_{g}dv_{ g}\\
&\geq\delta  \int_MSf^2dv_{ g}=\delta  \int_Nr^nSf^2dv_{\tilde g}.
\end{aligned}
\end{equation}
Putting $f=r^{\frac{2-n}{2}}\varphi$, since
$$
\begin{aligned}
&|\tilde \nabla f|_{\tilde g}^2=r^{2-n}|\tilde \nabla\varphi|_{\tilde g}^2-(n-2)r^{1-n}\varphi\tilde g(\tilde \nabla r, \tilde\nabla \varphi)
+\dfrac{(n-2)^2}4r^{-n}|\tilde \nabla r|_{\tilde g}^2\varphi^2,
\end{aligned}
$$
and
$$
\tilde\Delta \log r=n-n|dr|^2,
$$
we have, from (\ref{eq:3.8}) and (\ref{eq:3.9}),
\begin{equation}
\begin{aligned}
&\int_N|\tilde \nabla \varphi |^2_{\tilde g}dv_{\tilde g}\\
=&\int_Nr^{n-2}|\tilde \nabla f |^2_{\tilde g}dv_{\tilde g}
-\int_N\bigl(\dfrac{n(n-2)}2-\dfrac{n^2-4}{4}|dr|^2\bigl)|\varphi^2dv_{\tilde g}\\
\ge &\delta  \int_Nr^2S\varphi^2dv_{\tilde g}-\int_N\bigl(\dfrac{n(n-2)}2-\dfrac{n^2-4}{4}|dr|^2\bigl)\varphi^2dv_{\tilde g}.\\
\end{aligned}
\end{equation}
By a direct calculation, we have
\begin{equation}
\begin{aligned}
&r^2{\rm Bi_{(\alpha,\beta)}Ric}_{12}+{\langle X,\vec e_{n+1}\rangle} \big[((n-2)\beta-\alpha)h_{11}+(n-3)\alpha h_{22}\big]\\
&=\widetilde{{\rm Bi}_{(\alpha,\beta)}{\rm Ric}}_{12}-2(n-1)\beta-2(n-2)\alpha\\
&+(n\beta+(n-1)\alpha)|dr|^2+((n-2)\beta-\alpha)r_1^2+(n-3)\alpha r_2^2.\\
\end{aligned}
\end{equation}
Hence, according to the lemma \ref{lem3.2},  we obtain, with $b\delta=a$,

\begin{equation}
\begin{aligned}
&b\bigl(\delta r^2S-\dfrac{n(n-2)}2+\dfrac{n^2-4}4|dr|^2\bigl)\\
&\geq b\bigl(-\dfrac{n(n-2)}2+\dfrac{n^2-4}4|dr|^2\bigl)+r^2f_{\min}(a, \alpha, \beta)\\
&-r^2{\rm Bi_{(\alpha,\beta)}Ric}_{12}-{\langle X,\vec e_{n+1}\rangle} \big[((n-2)\beta-\alpha)h_{11}+(n-3)\alpha h_{22}\big]\\
&=b\bigl(-\dfrac{n(n-2)}2+\dfrac{n^2-4}4|dr|^2\bigl)+r^2f_{\min}(a, \alpha, \beta)\\
&-\widetilde{{\rm Bi}_{(\alpha,\beta)}{\rm Ric}}_{12}+2(n-1)\beta+2(n-2)\alpha\\
&-(n\beta+(n-1)\alpha)|dr|^2-((n-2)\beta-\alpha)r_1^2-(n-3)\alpha r_2^2.\\
\end{aligned}
\end{equation}
We can assume that the local orthonormal frame is chosen
such that $\tilde \Lambda_{(\alpha,\beta)}=\widetilde{{\rm Bi}_{(\alpha,\beta)}{\rm Ric}}_{12}$.
We conclude, from the lemma \ref{lem3.3},
\begin{equation}
\begin{aligned}
&b\bigl(\delta r^2S-\dfrac{n(n-2)}2+\dfrac{n^2-4}4|dr|^2\bigl)\\
&\geq b\bigl(-\dfrac{n(n-2)}2+\dfrac{n^2-4}4|dr|^2\bigl)+r^2f_{\min}(a, \alpha, \beta)\\
&-\tilde\Lambda_{(\alpha,\beta)}+2(n-1)\beta+2(n-2)\alpha\\
&-(n\beta+(n-1)\alpha)|dr|^2-((n-2)\beta-\alpha)r_1^2-(n-3)\alpha r_2^2\\
&\geq \varepsilon(n) -\tilde\Lambda_{(\alpha,\beta)},\\
\end{aligned}
\end{equation}
where $\varepsilon(n)$ is defined by the formula (\ref{eq:3.2}).
Therefore, we obtain that  there is  a smooth function  $V\geq \varepsilon(n) -\tilde\Lambda_{(\alpha,\beta)}$ such that
 $$
 b\int_N|\tilde \nabla\varphi |^2_{\tilde g}dv_{\tilde g}\geq \int_NV\varphi^2dv_{\tilde g}, \  \text {\rm for } \ \varphi \in \mathcal C^{1}_{c}(N).
 $$
 This finishes the proof of the theorem 3.1.
\end{proof}

\vskip5mm
\section{On warped $\mu$-bubbles and Euclidean volume growth}
\vskip4mm
\noindent
Assume that  Riemannian manifold ($N^n, \tilde g$) satisfies
for $V\geq \varepsilon(n) -\tilde\Lambda_{(\alpha,\beta)}$ and
 \begin{equation}\label{eq4.1}
 b\int_N|\tilde \nabla\varphi |^2_{\tilde g}dv_{\tilde g}\geq \int_NV\varphi^2dv_{\tilde g}, \  \text {\rm for } \ \varphi \in \mathcal C^{1}_{c}(N).
 \end{equation}
According to  the results of Fischer-Colbrie and Schoen  in \cite{fs}, there exists a positive function $w$ on $N^n$ such that
$$
-b\tilde \Delta w=Vw.
$$
For  an $n$-dimensional complete two-sided $\delta$-stable minimal hypersurface in $\mathbf R^{n+1}$, its universal covering is also
 an $n$-dimensional complete two-sided $\delta$-stable minimal hypersurface.
 Hence, we can assume that $X: M^n\to \mathbf R^{n+1}$
 is simply connected.  Furthermore, by making use of the standard point-picking argument in \cite{w} (also see \cite{cl1} and \cite{hlw}),
 we can assume that an $n$-dimensional complete two-sided $\delta$-stable minimal
 hypersurface in $\mathbf R^{n+1}$, which we considered, has one end.  In fact, for $n=3$, in \cite{cmmr}, Catino, Mari, Mastrolia and
 Roncoroni have proved that $3$-dimensional complete two-sided
 $\delta$-stable minimal  hypersurfaces in $\mathbf R^{4}$ have one end if $\delta>\frac13$ .
 For reader's convenience, we give the  idea  for  proving.  For an  $n$-dimensional complete two-sided
 $\delta$-stable minimal hypersurface in $\mathbf R^{n+1}$, if there exists a constant $C$ such that, for any $R$,
 $$
\sqrt {S(p)}d(p, \partial (B_R(O))\leq C, \quad p \in   B_R(O),
$$
holds,
then $X: M^n\to \mathbf R^{n+1}$ is a hyperplane, where $B_R(O)$ denotes the geodesic ball with radius  $R$.
Otherwise,  there exist sequences $\{R_i\}$, $\{p_i\}$ and $\{a_i\}$ such that
$$
\sqrt {S(p_i)}d(p_i, \partial (B_{R_i}(O))=a_i\to \infty.
$$
We  assume that $p_i$ maximizes $\sqrt {S(p)}d(p, \partial (B_{R_i}(O))$,  $p\in B_{R_i}(O)$.
By rescaling for $B_{R_i}(O)$, we can assume $\sqrt {S_{M_i}(p_i)}=|A_{M_{i}}(p_i)|=1$, where
$M_i=B_{R_i}(O)$ with intrinsic metric.
For any $k<a_i$ and $p\in M_i$ with $d_{M_i}(p, p_i)<k$, we have
$$
\sqrt {S_{M_i}(p)}\leq \dfrac{a_i}{d(p, \partial (B_{R_i}(O))}\leq \dfrac{a_i}{a_i-k}\to 1.
$$
Thus, for any $k$,
 $$
\sup_{d_{M_i}(p, p_i)<k}\sqrt {S_{M_i}(p)}\leq \dfrac{a_i}{d(p, \partial (B_{R_i}(O))}\leq \dfrac{a_i}{a_i-k}\to 1.
$$
Thus, $M_i=B_{R_i}(O)$ with intrinsic metric smoothly converge  to a  complete
two-sided $\delta$-stable minimal hypersurface  $M_{\infty}$ if necessary, taking a subsequence.
Hence,  $M_{\infty}$ is a  complete
two-sided $\delta$-stable minimal hypersurface  with the bounded second fundamental form.
According to the results of Cheng and Zhou \cite{cz}, we know that $M_{\infty}$ has  one end.
Thus, we only need to prove that a  complete
two-sided $\delta$-stable minimal hypersurface  with one end is a hyperplane.\\
Letting $y_0>0$ be  a constant,  which only depends on dimension $n$ and will be  defined
in (\ref{eq:4.11}), we have
\begin{theorem}\label{thm4.1}
For $q$, $\alpha$, $\beta$ and $\varepsilon(n)$ defined in (\ref{eq:3.2}) and (\ref{eq:3.5}),
letting $\Omega_{+} $ be a domain in $N^n$ such that
$N \setminus \mathcal N_{\frac3{y_0}\pi}(\Omega_{+})\neq \emptyset$, there exists a domain $\Omega_{*} $ such that
$V>\frac{\varepsilon(n)}{2\alpha}- \bar \lambda$ and
$$
\dfrac{4}{4-q} \dfrac{\beta}{\alpha}\int_{\Sigma}|\bar \nabla f |^2_{\bar g}dv_{\bar g}
\geq \int_{\Sigma}Vf^2dv_{\bar g}, \  \text {\rm for } \ f \in \mathcal C^{1}_{c}(\Sigma),
 $$
 where $\Sigma =\partial \Omega_{*}$,  $\bar g$ is the induced metric on $\Sigma$ from $N$ and $\bar \lambda$ is the minimum
 of Ricci curvature at each point  in   $\Sigma$.
\end{theorem}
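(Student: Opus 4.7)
The plan is to adapt the warped $\mu$-bubble technique of Chodosh--Li (with the weighted bi-Ricci refinements of Mazet and Chodosh--Li--Minter--Stryker) to the setting given by Theorem 3.1. Since that theorem provides the spectral inequality $b\int_N|\tilde\nabla\varphi|^2_{\tilde g}\,dv_{\tilde g}\ge\int_N V\varphi^2\,dv_{\tilde g}$ with $V\ge\varepsilon(n)-\tilde\Lambda_{(\alpha,\beta)}$, the result of Fischer--Colbrie--Schoen produces a positive function $w$ on $N$ solving $-b\tilde\Delta w=Vw$. The key idea is that $\log w$ can be used as a conformal weight in a prescribed-mean-curvature variational problem whose minimizer $\Sigma$ inherits, via the Gauss equation and the second variation, an induced stability inequality with a potential controlled by the ambient $(\alpha,\beta)$-bi-Ricci, precisely of the shape in the conclusion.

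Concretely, I would fix $\Omega_+$ and let $\rho$ denote the signed $\tilde g$-distance to $\Omega_+$. On the tubular neighborhood $\mathcal N_{\frac{3\pi}{y_0}}(\Omega_+)$ I define a warping function $h(\rho)$ by solving an ODE of type $h'+\tfrac{n}{n-1}\tfrac{h^2}{c}+cy_0^2=0$ (with a constant $c$ depending on $\alpha,\beta,q$); this is the standard construction that blows up on scale $\pi/y_0$ and hence requires exactly the gap $\frac{3\pi}{y_0}$ assumed in the hypothesis. Then I minimize the warped $\mu$-bubble functional
\begin{equation*}
\mathcal A(\Omega)=\int_{\partial^*\Omega} w^{\gamma}\,d\mathcal H^{n-1}_{\tilde g}
-\int_\Omega w^{\gamma}\,h\,dv_{\tilde g}
\end{equation*}
over Caccioppoli sets interpolating between $\Omega_+$ and its far complement, with an exponent $\gamma=\gamma(q,\alpha,\beta)$ chosen below. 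Standard Almgren--Pitts regularity (together with the barriers provided by the blow-up of $h$) yields a smooth minimizer $\Omega_*$ with boundary $\Sigma$ lying in the interior of the neighborhood.

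The first variation forces the prescribed mean curvature identity $H_\Sigma=h-\gamma\,\partial_\nu\log w$ along $\Sigma$. Computing the second variation with a test function $f\in\mathcal C_c^1(\Sigma)$, substituting the eigenvalue equation $-b\tilde\Delta w=Vw$, and integrating by parts produces an inequality of the form
\begin{equation*}
\int_\Sigma\!\bigl(c_1|\bar\nabla f|^2-[V+\tfrac{1}{2}|A_\Sigma|^2+\tfrac{1}{2}H_\Sigma^2+\tilde{\rm Ric}(\nu,\nu)-\partial_\nu h-\text{(weight terms)}]f^2\bigr)dv_{\bar g}\ge 0.
\end{equation*}
Using the Gauss equation to trade ambient Ricci for intrinsic Ricci picks up the bi-Ricci combination with weights $\beta$ (for the normal direction) and $\alpha$ (for tangent directions to $\Sigma$), which is precisely why the $(\alpha,\beta)$-bi-Ricci was defined in Section 2. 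The $\frac{4}{4-q}$ factor will appear from an application of the weighted Cauchy--Schwarz inequality $|A_\Sigma|^2\ge\tfrac{1}{n-1}H_\Sigma^2$ combined with Young's inequality with parameter $q$ when absorbing cross terms $H_\Sigma\cdot\partial_\nu\log w$ into $|\bar\nabla(w^\gamma f)|^2$-type quantities.

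The main technical obstacle is the \emph{choice of the parameters} $(\gamma,c,y_0)$ so that, after all of the integrations by parts and the Young inequality, the coefficient in front of $\int_\Sigma|\bar\nabla f|^2$ collapses to exactly $\tfrac{4}{4-q}\tfrac{\beta}{\alpha}$, and the pointwise lower bound reduces from $V\ge\varepsilon(n)-\tilde\Lambda_{(\alpha,\beta)}$ on $N$ to $V>\tfrac{\varepsilon(n)}{2\alpha}-\bar\lambda$ on $\Sigma$ (with the $\frac{1}{2\alpha}$ normalization reflecting the partial weight the tangent-plane Ricci carries inside $\tilde\Lambda_{(\alpha,\beta)}$). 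I expect the existence and regularity of the warped $\mu$-bubble to be routine given the blow-up barriers; by contrast, identifying and fine-tuning the exponent $\gamma$ and the constants in the ODE for $h$ is the delicate step, because every factor has to be balanced against the specific $\delta_0(n),\alpha,\beta$ chosen in (\ref{eq:3.5}) so that the numerical lower bound $\varepsilon(n)$ survives the $\frac{1}{2\alpha}$ reduction on $\Sigma$.
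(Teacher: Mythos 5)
Your proposal follows essentially the same route as the paper: a warped $\mu$-bubble functional $\int_{\partial^*\Omega}w^{q}\,dA-\int_\Omega w^{q}h\,dv_{\tilde g}$ built from the Fischer--Colbrie--Schoen solution of $-b\tilde\Delta w=Vw$, the prescribed-mean-curvature identity from the first variation, the second variation combined with the Gauss equation to convert the ambient $(\alpha,\beta)$-bi-Ricci into intrinsic Ricci plus second-fundamental-form terms, and a tangent-type warping function $h$ blowing up on scale $\pi/y_0$ to absorb the leftover $h$-terms. The only notable inaccuracy is the source of the factor $\frac{4}{4-q}$: in the paper it arises purely from the substitution $f=w^{q/2}\varphi$ and Young's inequality on the tangential cross term $q\,w^{-1}f\,\bar g(\bar\nabla w,\bar\nabla f)$ against $\frac{(q-4)q}{4}w^{-2}|\bar\nabla w|^2f^2$, not from $|A_\Sigma|^2\ge\frac{1}{n-1}H_\Sigma^2$ or from the $H_\Sigma\cdot\partial_\nu\log w$ terms.
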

\begin{proof}
Let $\Omega_{-} $ and $\Omega_{+} $ be domains in $N^n$ such that
$$
\Omega_{+}\subset\subset  \Omega_{-} \subset\subset  \bar {\mathcal N}_{\frac{3}{ {y_0}}\pi}(\Omega_{+}).
$$
Taking a smooth function $h:\Omega_{-}\setminus \Omega_{+}\to \mathbf R$ such that
$$
\lim_{p\to\partial \Omega_{+}}h(p)=-\infty, \  \   \lim_{p\to\partial \Omega_{-}}h(p)=\infty.
$$
We take a domain $\Omega$ such that
$$
\Omega_{+}\subset\subset  \Omega \subset\subset\Omega_{-}
$$
with finite perimeter.
From (\ref{eq4.1}), we know that  there exists a smooth positive function $w$ such that
\begin{equation}\label{eq4.2}
-b\tilde \Delta w=Vw.
\end{equation}
We consider
\begin{equation*}
\mathcal A(\Omega)=\int_{\partial^*\Omega}w^q dA_{\bar g}-\int_{\Omega}w^qhdv_{\tilde g},
\end{equation*}
where $\partial^*\Omega$ is the reduced boundary of $\Omega$.
By the same arguments as in Chodosh and Li \cite{cl1} and Zhu \cite{z}, there exists a domain  $\Omega_{*}$ with finite perimeter,
which minimizes the functional $\mathcal A$ and $\partial^*\Omega_{*}=\Sigma\neq \emptyset$ is smooth, such that
$$
\Omega_{+}\subset\subset  \Omega_{*} \subset\subset\Omega_{-} \subset\subset  \bar {\mathcal N}_{\frac{3}{ {y_0}}\pi}(\Omega_{+}).
$$
We consider a variation $\Omega_{t}$ of  $\Omega_{*}$ generated by $\varphi \vec e_{n}$, where $\vec e_{n}$ denotes the
outward unit normal vector  field of $\Sigma$.
\begin{lemma}
The first variation formula is given by
\begin{equation}
0=\dfrac{d\mathcal A(\Omega_t)}{dt}\bigl|_{t=0}=\int_{\Sigma}(\bar H+q w^{-1}dw(\vec e_{n})
-h)w^q\varphi dA_{\bar g}, \ {\rm for \ any  } \ \   \varphi \in \mathcal C^{1}_{c}(\Sigma)
\end{equation}
and
\begin{equation}\label{eq4.4}
\bar H+q\  w^{-1}dw(\vec e_{n}) -h=0,
\end{equation}
where $\bar H$ denotes  the mean curvature of $\Sigma$ with respect to the induced metric $\bar g$.
\end{lemma}
\noindent
By  making use of
$$
\begin{aligned}
&\dfrac{d\bar {H}(t)}{dt}=-\bar \Delta \varphi-(|\bar B|^2+\widetilde{\rm Ric}(\vec e_{n}, \vec e_{n}))\varphi,\\
&\tilde\nabla_{n}\tilde\nabla_{n} w=\tilde {\Delta }w-\bar{\Delta }w -\bar H\tilde w_{n},\\
\end{aligned}
$$
we obtain the following second variation formula
\begin{lemma}\label{lem4.2}
\begin{equation}
\begin{aligned}
&0\leq \dfrac{d^2\mathcal A(\Omega_t)}{dt^2}\bigl|_{t=0}\\
&=\int_{\Sigma}w^q\bigl\{-\varphi \bar{\Delta}\varphi-(|\bar B|^2+\widetilde{\rm Ric}(\vec e_{n}, \vec e_{n}))\varphi^2\\
&+\big(qw^{-1}\big[\tilde\Delta w-\bar\Delta w-\bar H\tilde w_{n}\big]-qw^{-2}\big(dw(\vec e_{n})\big)^2\big)\varphi^2\\
& -qw^{-1}\bar g(\bar{ \nabla} w, \bar{\nabla} \varphi)\varphi
-dh(\vec e_{n})\varphi ^2\bigl\}dA_{\bar g}, \ {\rm for \ any  } \ \   \varphi \in \mathcal C^{1}_{c}(\Sigma),
\end{aligned}
\end{equation}
where $\bar B$ is the second fundamental form of $\Sigma$.
\end{lemma}
\noindent
For $f=w^{\frac q2}\varphi$, we have
$$
\bar \nabla f-\frac{q}2w^{-1}f \bar \nabla w =w^{\frac q2}\bar \nabla\varphi
$$
and, from Stokes theorem,
\begin{equation}\label{eq4.6}
\begin{aligned}
&\int_{\Sigma}w^q\bigl\{-\varphi \bar{\Delta}\varphi
-qw^{-1}\bar\Delta w\varphi^2 -qw^{-1}\bar g(\bar{ \nabla} w, \bar{\nabla} \varphi)\varphi \bigl\}dA_{\bar g}\\
&=\int_{\Sigma}\big\{|\bar\nabla f|^2+\dfrac{(q-4)q}4w^{-2}|\bar\nabla w|^2f^2+qw^{-1}f\bar g(\bar{ \nabla} w, \bar{\nabla}f)\bigl\}dA_{\bar g}\\
&\leq \dfrac{4}{4-q}\int_{\Sigma}|\bar\nabla f|^2dA_{\bar g}.\\
\end{aligned}
\end{equation}
From the lemma \ref{lem4.2}, (\ref{eq4.2}), (\ref{eq4.4}) and (\ref{eq4.6}), we obtain
$$
\begin{aligned}
&\dfrac{4}{4-q}\int_{\Sigma}|\bar\nabla f|^2dA_{\bar g}\\
&\geq \int_{\Sigma}\bigl(|\bar B|^2+\widetilde{\rm Ric}(\vec e_n,\vec e_n)+q(\tilde{\nabla}_{n}\log w)^2
-qw^{-1}\tilde{\Delta} w+q\bar H\tilde{\nabla}_{n}\log w+\tilde \nabla_{n}h\bigl)f^2dA_{\bar g}\\
&=\int_{\Sigma}\bigl\{|\bar B|^2+\widetilde{\rm Ric}(\vec e_n,\vec e_n)+\dfrac{q}bV
+q\big((\tilde{\nabla}_{n}\log w)^2+\bar H\tilde{\nabla}_{n}\log w\big)+\tilde \nabla_{n}h\bigl\}f^2dA_{\bar g}.\\
\end{aligned}
$$
From Gauss equation, we know
$$
\bar {\rm Ric}_{11}= \sum_{j=1}^{n-1}\bar R_{1j1j}=\sum_{j=1}^{n-1}[\tilde R_{1j1j}+(\bar B_{11}\bar B_{jj}-\bar B_{1j}^2)]
$$
\begin{equation}
\begin{aligned}
&\widetilde{\rm Ric}(\vec e_{n},\vec e_n)-\dfrac{1}{\beta}\tilde \Lambda_{(\alpha, \beta)}\\
&\geq \widetilde{\rm Ric}(\vec e_{n},\vec e_{n})-\dfrac{1}{\beta}\widetilde{{\rm Bi}_{(\alpha,\beta)}{\rm Ric}}(\vec e_{n}, \vec e_1)\\
&=-\dfrac{\alpha}{\beta}\sum_{j=2}^{n-1}\tilde R_{1j1j}
=\dfrac{\alpha}{\beta}\bigl[\sum_{j=1}^{n-1}(\bar B_{11}\bar B_{jj}-\bar B_{1j}^2)-\bar {\rm Ric}_{11}\bigl].
\end{aligned}
\end{equation}
Taking $q=\dfrac b{\beta}$ and $\bar \lambda=\bar{\rm Ric}_{11}$ and from (\ref{eq4.4})
\begin{equation*}
-q\  w^{-1}dw(\eta)=\bar H -h,
\end{equation*}
we have
\begin{equation}
\begin{aligned}
\dfrac{4 }{4-q}\int_{\Sigma}|\bar\nabla f|^2dA_{\bar g}
&\geq
\int_{\Sigma}\bigl\{|\bar B|^2+\widetilde{\rm Ric}(\vec e_n,\vec e_n)+\dfrac{q}bV\\
&+q\big((\tilde{\nabla}_{n}\log w)^2+\bar H\tilde{\nabla}_{n}\log w\big)+\tilde \nabla_{n}h\bigl\}f^2dA_{\bar g}\\
&\geq
\int_{\Sigma}\bigl\{|\bar B|^2+\dfrac{\varepsilon(n)}{\beta}-\dfrac{\alpha}{\beta}\bar \lambda +\dfrac{\alpha}{\beta}(\bar B_{11}\bar H-\sum_{j=1}^{n-1}\bar B_{1j}^2)\\
&+\dfrac {\beta}b(\bar H-h)^2-\bar H(\bar H-h)+\tilde \nabla_{n}h\bigl\}f^2dA_{\bar g}.\\
\end{aligned}
\end{equation}
At each point, we may take the orthonormal frame $\{\vec e_1, \cdots, \vec e_{n-1}\}$ such that
$\bar B_{ij}=\bar \lambda_i\delta_{ij} $ and $\bar \mu_i=\bar \lambda_i-\dfrac{1}{n-1}\bar H$. Thus, we have, for $\dfrac{\alpha}{\beta}<\dfrac{n-1}{n-2}$,
$$
\begin{aligned}
&|\bar B|^2+\dfrac{\alpha}{\beta}(\bar B_{11}\bar H-\sum_{j=1}^{n-1}\bar B_{1j}^2)\\
&=\sum_{i=1}^{n-1}\bar \mu_i^2+\dfrac{1}{n-1}\bar H^2
+\dfrac{\alpha}{\beta}\big(\mu_1\bar H+\dfrac{1}{n-1}\bar H^2-\bar\mu_1^2-\dfrac{2}{n-1}\bar H\bar\mu_1-\dfrac{1}{(n-1)^2}\bar H^2\big)\\
&\geq (\dfrac{n-1}{n-2}-\dfrac{\alpha}{\beta})\bar\mu_1^2+\frac{(n-3)\alpha}{(n-1)\beta}\bar H\bar \mu_1
+\dfrac1{n-1}(1+\dfrac{\alpha}{\beta}\dfrac{n-2}{n-1})\bar H^2\\
&\geq - \dfrac{(n-2)(n-3)^2\alpha^2}{4(n-1)^2\beta[(n-1)\beta-(n-2)\alpha]}\bar H^2
+\dfrac{(n-1)\beta+(n-2)\alpha}{(n-1)^2\beta}\bar H^2\\
&= \dfrac{4\beta^2-(n-2)\alpha^2}{4\beta[(n-1)\beta-(n-2)\alpha]}\bar H^2.\\
\end{aligned}
$$
We obtain
\begin{equation*}
\begin{aligned}
&\dfrac{4}{4-q}\int_{\Sigma}|\bar\nabla f|^2dA_{\bar g}\\
&\geq
\int_{\Sigma}\bigl\{\dfrac{\varepsilon(n)}{\beta}-\dfrac{\alpha}{\beta}\bar \lambda + \dfrac{4\beta^2-(n-2)\alpha^2}{4\beta[(n-1)\beta-(n-2)\alpha]}\bar H^2\\
&+\dfrac {\beta}b(\bar H-h)^2-\bar H(\bar H-h)+\tilde \nabla_{n}h\big)\bigl\}f^2dA_{\bar g}\\
&=\int_{\Sigma}\bigl\{\dfrac{\varepsilon(n)}{\beta}-\dfrac{\alpha}{\beta}\bar \lambda
+\bigl( \dfrac{4\beta^2-(n-2)\alpha^2}{4\beta[(n-1)\beta-(n-2)\alpha]}+\dfrac {\beta}b-1\bigl)\bar H^2\\
&+(1-\dfrac {2\beta}b) \bar H h+\dfrac {\beta}bh^2 +\tilde \nabla_{n}h\big)\bigl\}f^2dA_{\bar g}\\
&\geq \int_{\Sigma}\bigl\{\dfrac{\varepsilon(n)}{\beta}-\dfrac{\alpha}{\beta}\bar \lambda
+\bigl( \dfrac{4\beta^2-(n-2)\alpha^2}{4\beta[(n-1)\beta-(n-2)\alpha]}+\dfrac {1}q-1-L(n)|\frac{1}2-\frac {1}q|\bigl)\bar H^2\\
&+\big[\dfrac {1}q-\dfrac1{L(n)}|\frac{1}2-\frac {1}q|\big]h^2 +\tilde \nabla_{n}h\big)\bigl\}f^2dA_{\bar g},\\
\end{aligned}
\end{equation*}
that is.
\begin{equation}
\begin{aligned}
&\dfrac{4 }{4-q}\dfrac{\beta}{\alpha}\int_{\Sigma}|\bar\nabla f|^2dA_{\bar g}\\
&\geq
 \int_{\Sigma}\bigl\{\dfrac{\varepsilon(n)}{\alpha}-\bar \lambda
+\big[\dfrac {1}q-\dfrac1{L(n)}|\frac{1}2-\frac {1}q|\big]\dfrac{\beta}{\alpha}h^2
+ \dfrac{\beta}{\alpha}\tilde \nabla_{n}h\big)\bigl\}f^2dA_{\bar g},\\
\end{aligned}
\end{equation}
where $L(n)$ is given by $L(3)=\frac{71}{11}$,  $L(4)=\frac{189697}{206625}\approx  0.9181$,
$L(5)=\frac{106986857}{251572482}\approx  0.4253$. \\
According to the following lemma, we have
\begin{equation}\label{eq4.10}
\begin{aligned}
&\dfrac{4 }{4-q}\dfrac{\beta}{\alpha}\int_{\Sigma}|\bar\nabla f|^2dA_{\bar g}
\geq
 \int_{\Sigma}\bigl(\dfrac{\varepsilon(n)}{2\alpha}-\bar \lambda \bigl)f^2dA_{\bar g}.\\
\end{aligned}
\end{equation}
\end{proof}
\noindent
Taking $\gamma_0(n)=\big\{\frac1q-\frac{1}{L(n)}(\frac1q-\frac12\big\}\frac{\beta}{\alpha}$, that is,  $\gamma_0(3)=\frac{77}{142}$,
$\gamma_0(4)=\frac{276875}{569091}\approx 0.48652 $, $\gamma_0(5)=\frac{667989}{855894856}\approx 0.00078$.

\begin{lemma}
There exists a smooth function $h$ such that
$$
\dfrac{\varepsilon(n)}{2\alpha}+ \gamma_0(n)h^2+ \dfrac{\beta}{\alpha}\tilde \nabla_{n}h\geq 0.
$$
\end{lemma}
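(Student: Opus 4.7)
The construction is a tangent--profile warped barrier, standard in the $\mu$-bubble literature. Using the geometric assumption $N\setminus\mathcal N_{3\pi/y_{0}}(\Omega_{+})\neq\emptyset$, I will first produce a smooth function $\rho:\overline{\Omega_{-}}\setminus\Omega_{+}\to[-\pi/(2k),\pi/(2k)]$, with $k:=y_{0}/3$, such that $\rho=-\pi/(2k)$ on $\partial\Omega_{+}$, $\rho=\pi/(2k)$ on $\partial\Omega_{-}$, and $|\tilde\nabla\rho|_{\tilde g}\le 1$ pointwise; this is obtained by mollifying the signed distance to $\partial\Omega_{+}$, the slack factor $3$ in the radius $3\pi/y_{0}$ providing the room. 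Then set
\[
h(p):=C\,\tan\!\bigl(k\,\rho(p)\bigr),
\]
for a constant $C>0$ fixed below. Since $\tan$ blows up at $\pm\pi/2$, we get $h\to-\infty$ at $\partial\Omega_{+}$ and $h\to+\infty$ at $\partial\Omega_{-}$, as required for the $\mu$-bubble minimization.

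A direct computation gives $h'(\rho)=Ck\sec^{2}(k\rho)=Ck+(k/C)h^{2}\ge 0$. Because $\vec{e}_{n}$ is a unit vector and $|\tilde\nabla\rho|_{\tilde g}\le 1$, pointwise on $\Omega_{-}\setminus\Omega_{+}$,
\[
\tilde\nabla_{n}h=h'(\rho)\,\langle\tilde\nabla\rho,\vec{e}_{n}\rangle_{\tilde g}\ge -h'(\rho)=-Ck-\frac{k}{C}h^{2}.
\]
Substituting into the desired inequality yields
\[
\frac{\varepsilon(n)}{2\alpha}+\gamma_{0}(n)h^{2}+\frac{\beta}{\alpha}\,\tilde\nabla_{n}h
\ge \Bigl(\gamma_{0}(n)-\frac{\beta k}{\alpha C}\Bigr)h^{2}+\Bigl(\frac{\varepsilon(n)}{2\alpha}-\frac{\beta Ck}{\alpha}\Bigr).
\]
Now choose $C=\beta k/(\alpha\gamma_{0}(n))$ so the $h^{2}$-coefficient vanishes; the remaining constant term reduces to $\varepsilon(n)/(2\alpha)-\beta^{2}k^{2}/(\alpha^{2}\gamma_{0}(n))\ge 0$, i.e. the requirement $k^{2}\le \alpha\,\gamma_{0}(n)\,\varepsilon(n)/(2\beta^{2})$. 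Fixing
\[
y_{0}:=3\sqrt{\frac{\alpha\,\gamma_{0}(n)\,\varepsilon(n)}{2\beta^{2}}},
\]
which depends only on $n$ through the already--fixed $\alpha,\beta,\gamma_{0}(n),\varepsilon(n)$ and matches the value anticipated at (\ref{eq:4.11}), makes $k=y_{0}/3$ satisfy this bound and makes the entire right--hand side non-negative.

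\textbf{Main obstacle.} The algebraic tuning of $C$ and $k$ is essentially forced by matching the $h^{2}$ and constant terms, so the computation itself is mechanical. The one point that requires a little care is producing the smooth $\rho$ with $|\tilde\nabla\rho|_{\tilde g}\le 1$ and the prescribed $\pm\pi/(2k)$ boundary limits; this is handled by a standard mollification of the signed distance to $\partial\Omega_{+}$, with the factor--of--three slack in the neighborhood radius absorbing any loss in the Lipschitz constant introduced by the smoothing.
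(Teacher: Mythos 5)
Your construction is the same tangent-profile barrier the paper uses: compose $\tan$ with a (smoothed) distance to $\partial\Omega_{+}$, so that $|h'|$ is a quadratic polynomial in $h$, and tune the two coefficients against $\gamma_0(n)$ and $\varepsilon(n)/(2\alpha)$. The algebra is right and the one-sided bound $\tilde\nabla_n h\geq -h'(\rho)$ is all the lemma needs.

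There is, however, a quantitative slip that makes the construction of $\rho$ fail as literally written. With $k=y_0/3$ you need $\rho$ to sweep out the full interval $[-\pi/(2k),\pi/(2k)]$, of length $\pi/k=3\pi/y_0$, while staying $1$-Lipschitz and while $\Omega_{-}$ stays inside $\mathcal N_{3\pi/y_0}(\Omega_{+})$; since a $1$-Lipschitz function needs distance at least $3\pi/y_0$ to traverse that range, and the neighborhood has radius exactly $3\pi/y_0$, there is \emph{zero} slack left for mollification --- the factor $3$ is entirely consumed by your choice $k=y_0/3$, contrary to your closing remark. Relatedly, your $y_0=3\sqrt{\alpha\gamma_0(n)\varepsilon(n)/(2\beta^2)}$ does not match (\ref{eq:4.11}); the paper's value is smaller by a factor of $6$ precisely because it reserves a factor of $2$ for the smoothing (it uses $F$ with $\tfrac12 d\leq F\leq 2d$, $|\tilde\nabla F|\leq 2$, and a tangent profile of period $\pi/y_0$ in the $F$-coordinate, so the swept region has width at most $2(1+\xi_0)\pi/y_0\leq 3\pi/y_0$). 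The fix is a one-line change --- either allow $|\tilde\nabla\rho|\leq 1+\epsilon$ and make the inequality $k^2\leq\alpha\gamma_0(n)\varepsilon(n)/(2\beta^2)$ strict by shrinking $y_0$, or simply adopt the paper's smaller $y_0$ --- and since $y_0$ only enters the later volume estimate through the constant $\Lambda$, nothing downstream is harmed. But as stated, the step "mollify the signed distance keeping $|\tilde\nabla\rho|_{\tilde g}\leq 1$ with the prescribed boundary values" cannot be carried out with your constants.
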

\begin{proof}
Defining
\begin{equation}\label{eq:4.11}
x_0=x_0(n)=\sqrt{\dfrac{\varepsilon(n)}{2\alpha\gamma_0(n)}}, \  \
y_0=y_0(n)=\dfrac1{2\sqrt2\beta}\sqrt{\alpha\varepsilon(n)\gamma_0(n)}
\end{equation}
and $-\eta(t)=x_0\tan(y_0t-\frac{\pi}2)$,   $t \in (0, \frac1{y_0}\pi)$,  we obtain
$$
-\eta^{\prime}(t)=x_0y_0+\dfrac{y_0}{x_0}\eta(t)^2,  \ \ t \in (0, \frac1{y_0}\pi).
$$
Let $F: N^n\setminus \Omega_{+}\to \mathbf R$ be a smoothing of the distance function $d_{\tilde g}(\cdot, \partial \Omega_{+})$
such that
$$
\dfrac12d_{\tilde g}(\cdot, \partial \Omega_{+})\leq F(\cdot)\leq 2d_{\tilde g}(\cdot, \partial \Omega_{+}) \ {\rm and }\ \ |\tilde \nabla F|^2\leq 4.
$$
Taking $\xi_0>0$ very  small such that $(1+\xi_0)\frac1{y_0}\pi$ is a regular value of $F$. Define $\Omega_{-}$ by
$$
\Omega_{-}=\Omega_{+}\cup\{p\in N^n\setminus \Omega_{+};\ \  F(p)\leq (1+\xi_0)\frac1{y_0}\pi\}
$$
On $\Omega_{-}$,
$$
d_{\tilde g}(\cdot, \partial \Omega_{+})\leq 2(1+\xi_0)\frac1{y_0}\pi\leq \dfrac{3}{ {y_0}}\pi.
$$
Hence,  we have
$$
\Omega_{-}\subset \mathcal N_{\frac{3}{ {y_0}}\pi}(\Omega_{+}).
$$
On $\{p\in N^n\setminus \Omega_{+}; \ \ 0<F(p)< (1+\xi_0)\frac1{y_0}\pi\}$,  we  define $h(p)=\eta(\dfrac{F(p)}{1+\xi_0})$.
We have $\lim_{p\to\partial\Omega{\pm}}h(p)=\pm\infty$.
Since
$$
-\eta^{\prime}(s)= x_0y_0+ \dfrac{y_0}{x_0}\eta(s)^2,
$$
we have
$$
\begin{aligned}
&\dfrac{\beta}{\alpha}|\tilde \nabla_{n}h|\\
&=\dfrac{\beta}{\alpha}|\eta^{\prime}(\dfrac{F(p)}{1+\xi_0})\dfrac{1}{1+\xi_0}\tilde \nabla_{n}F(p)|\\
&\leq 2 \dfrac{\beta}{\alpha}\big(x_0y_0+ \dfrac{y_0}{x_0}\eta(\dfrac{F(p)}{1+\xi_0})^2\big)\dfrac{1}{1+\xi_0}\\
&\leq  2 \dfrac{\beta}{\alpha}\big(x_0y_0+ \dfrac{y_0}{x_0}h^2\big).
\end{aligned}
$$
Because of  $x_0=\sqrt{\dfrac{\varepsilon(n)}{2\alpha\gamma_0(n)}}$,  $ y_0=\dfrac1{2\sqrt2\beta}\sqrt{\alpha\varepsilon(n)\gamma_0(n)}$,
we have
$$
2 \dfrac{\beta}{\alpha}x_0y_0=\dfrac{\varepsilon(n)}{2\alpha},  \ \ 2 \dfrac{\beta}{\alpha} \dfrac{y_0}{x_0}= \gamma_0(n).
$$
Therefore, we obtain
$$
\dfrac{\beta}{\alpha}|\tilde \nabla_{n}h|\leq \dfrac{\varepsilon(n)}{2\alpha}+ \gamma_0(n)h^2.
$$
\end{proof}
\noindent

\begin{theorem}
For an $n$-dimensional complete
 two-sided $\delta$-stable minimal hypersurface $X:M^n\to \mathbf R^{n+1}$  in $\mathbf R^{n+1}$
 with $3\leq n\leq 5$ and $\delta>\delta_0(n)$, the geodesic ball $B_R(p_0)$ of radius $R>0$ centered at some point $p_0$
 in $M^n$ satisfies
 $$
 \text{\rm vol }\{B_{R} (p_0)\} \leq \Lambda R^n,
 $$
 where $\Lambda$ is constant and
 \begin{equation*}
\begin{aligned}
&\delta_0 (3)=\frac13, \ \delta_0 (4)=\frac12,  \  \ \delta_0(5)=\frac{21}{22}.\\
\end{aligned}
\end{equation*}
\end{theorem}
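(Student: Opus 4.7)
The plan is to derive the Euclidean volume growth of $B_R(p_0)$ by iterating the warped $\mu$-bubble construction of Theorem 4.1 along a conformal exhaustion of $N=M\setminus X^{-1}(O)$, and then passing back to the original metric via the Gulliver-Lawson factor $r^{-2}$. After translating so that $p_0=O$, and invoking the one-end reduction spelled out at the beginning of Section 4, I work throughout in $(N,\tilde g)$ with $\tilde g=r^{-2}g$; since $B_R(p_0)\subset\{r\leq R\}$ it suffices to bound $\text{\rm vol}_g(\{r\leq R\})$ by $\Lambda R^n$. For each large radius $R_0\gg 1$, I would take $\Omega_+\subset N$ to be (essentially) the sub-level set $\{p\in N:r(p)\leq R_0\}$, which has bounded $\tilde g$-diameter after the conformal change. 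Because the hypersurface has exactly one end, $N\setminus\bar{\mathcal N}_{3\pi/y_0}(\Omega_+)$ is non-empty for all sufficiently large $R_0$, and Theorem 4.1 then supplies a smooth $\mu$-bubble $\Omega_*$ with $\Omega_+\subset\subset\Omega_*\subset\subset\bar{\mathcal N}_{3\pi/y_0}(\Omega_+)$ whose boundary $\Sigma=\partial\Omega_*$ satisfies the Poincaré-type inequality (4.10).

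Next, since $3\leq n\leq 5$ the bubble $\Sigma$ has dimension at most $4$, and I would upgrade (4.10) to intrinsic geometric control on $\Sigma$ following the bi-Ricci strategy of Chodosh-Li \cite{cl3}, Chodosh-Li-Minter-Stryker \cite{clms} and Mazet \cite{m}. Testing (4.10) against a logarithmic cut-off of an intrinsic distance function on $\Sigma$ and invoking the Bonnet-Myers type argument adapted to the present conformal setting should give uniform bounds on the intrinsic $\bar g$-diameter and the $\bar g$-area of each component of $\Sigma$, depending only on $n,\alpha,\beta,q$ and $\varepsilon(n)$.

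Third, I would translate these bounds back to the Euclidean metric. Because $\Sigma$ lies in a $\tilde g$-annulus of width at most $3\pi/y_0$ around $\Omega_+$, it is contained in the extrinsic annulus $\{R_0\leq r\leq C_1 R_0\}$ with $C_1=\exp(3\pi/y_0)$, and the conformal identity $dA_g=r^{n-1}dA_{\bar g}$ converts the $\bar g$-area bound into $\text{\rm vol}_g(\Sigma)\leq C_2 R_0^{n-1}$. Letting $R_0$ vary over the dyadic interval $[R,2R]$ and applying a sweep-out / co-area argument to the resulting one-parameter family of bubbles produces
$$
\text{\rm vol}_g(\{r\leq 2C_1 R\})\leq\Lambda R^n,
$$
from which $\text{\rm vol}_g(B_R(p_0))\leq\Lambda R^n$ follows at once.

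The principal obstacle is the second step: promoting the Poincaré-type inequality (4.10) to a hard diameter and area bound on $\Sigma$ in the dimensional range $n-1\leq 4$. This is precisely where the careful $(\alpha,\beta)$-bi-Ricci choices of Lemmas \ref{lem3.1}-\ref{lem3.3} are indispensable, since they guarantee both that the zeroth-order term $\varepsilon(n)/(2\alpha)-\bar\lambda$ on the right of (4.10) has a strictly positive constant part, and that the gradient coefficient $\frac{4}{4-q}\frac{\beta}{\alpha}$ on the left is small enough to close the Bonnet-Myers type argument exactly in the regime $\delta>\delta_0(n)$; the delicate numerical choices (\ref{eq:3.5}) and the corresponding $\varepsilon(n)$ in (\ref{eq:3.2}) are what confine the argument to $3\leq n\leq 5$.
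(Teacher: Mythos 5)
Your first three steps track the paper's argument closely: conformal change to $\tilde g=r^{-2}g$, application of Theorem 4.1 to produce a $\mu$-bubble $\Sigma$ satisfying the spectral Ricci lower bound (4.10), control of $\Sigma$ via the spectral Bishop--Gromov/Bonnet--Myers machinery of Antonelli--Xu \cite{ax1} (Gauss--Bonnet when $n=3$), and conversion of the $\bar g$-area bound to ${\rm Area}_g(\Sigma_0)\leq C R^{n-1}$ using $r\leq 2R\exp(3\pi/y_0)$ on the $\tilde g$-neighborhood. The one structural point you gloss over, which the paper handles explicitly, is topological: one must use simple connectivity and the one-end reduction to extract from $\partial\Omega_*$ a \emph{single} boundary component $\Sigma_0$ of the unbounded complementary component, so that the bounded region $\Omega_1$ it encloses contains $B_R(p_0)$ and has $\partial\Omega_1=\Sigma_0$ as its entire boundary.

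The genuine gap is your final step. You propose to pass from the perimeter bound to the volume bound by letting $R_0$ sweep over $[R,2R]$ and integrating the areas of the resulting family of bubbles via the co-area formula. This does not work as stated: the $\mu$-bubbles are minimizers of the functional $\mathcal A$ for each fixed $\Omega_+$, not level sets of a function with a gradient bounded below, so there is no co-area identity expressing the volume of the swept region as an integral of their areas; the family need not vary continuously or monotonically in $R_0$ (minimizers can jump or coincide over whole parameter intervals), and even granting a foliation, the co-area formula carries a factor $|\nabla u|^{-1}$ that you cannot control. Moreover, such an argument would at best bound the volume of an annular region, not of the enclosed domain $\Omega_1\supset B_R(p_0)$. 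The paper closes the argument with a single bubble and one additional tool you omit: the isoperimetric inequality for domains in minimal hypersurfaces of $\mathbf R^{n+1}$ (Michael--Simon \cite{ms}, in sharp form Brendle \cite{b2}), which gives ${\rm Vol}_g(\Omega_1)\leq C\,({\rm Area}_g(\partial\Omega_1))^{n/(n-1)}\leq \Lambda R^n$ directly. Replacing your sweep-out by this inequality repairs the proof and recovers the paper's argument.
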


\begin{proof}
Let $\Omega_{+}$ be a smooth domain in $M^n$ such that $B_R(p_0)\subset \Omega_{+}\subset B_{2R}(p_0)$ and
$O\notin X(\partial \Omega_{+})$. According to the Gulliver-Lawson conformal transformation $\tilde g=r^{-2}g$ and
theorem \ref{thm4.1},  we know that there exists a domain $\Omega_{*}$ in $M^n$ such that
$$
\Omega_{+}\subset\Omega_{*}\subset \mathcal N_{\frac3{y_0}\pi}(\Omega_{+})
$$
and  $\Sigma=\partial \Omega_{*}$ satisfies (\ref{eq4.10}), which is called the spectral Ricci curvature lower bound for the metric $\tilde g$.
We consider the connected component $\Omega_{**}$ of $\Omega_{*}$. Since $X:M^n\to \mathbf R^{n+1}$
is simply connected and has only one end, the unbounded component
of $M^n\setminus \Omega_{**}$ has only boundary component $\Sigma_0$.
Let $\Omega_1$ denote the bounded component of $M^n\setminus\Sigma_0$. We have
$$
B_R(p_0)\subset \Omega_{1}, \ \ \partial \Omega_1\subset \mathcal N_{\frac3{y_0}\pi}(B_{2R}(p_0)).
$$
Since  the Euclidean distance function $r$ is bounded by $2R$ on $\partial B_{2R}(p_0)$, making use of the lemma 6.2 in \cite{cl3},
on $\mathcal N_{\frac3{y_0}\pi}(B_{2R}(p_0)$, we have
$$
r\leq 2R\exp (\frac3{y_0}\pi).
$$
Because  the spectral Ricci curvature bound (\ref{eq4.10}) holds and, for $n=4, \ 5$,   we have
$$
0< \frac{4 }{4-q}\frac{\beta}{\alpha}\leq \frac{n-2}{n-3}.
$$
 From  estimate of area due to Antonelli and Xu \cite{ax1}, we obtain
$$
{\rm Area}_{\bar g}(\Sigma_0)\leq (\dfrac{(n-2)\alpha}{\varepsilon(n)})^{\frac{n-1}2}{\rm Area }(S^{n-1}).
$$
Thus, the  area of $\Sigma_0$ with respect to the induced metric $g$ satisfies
$$
{\rm Area}(\Sigma_0)\leq (\dfrac{(n-2)\alpha}{\varepsilon(n)})^{\frac{n-1}2}{\rm Area }(S^{n-1}) (2R\exp (\frac3{y_0}\pi))^{n-1}.
$$
The isoperimetric inequality for minimal hypersurfaces in $\mathbf R^{n+1}$ yields
$$
{\rm Vol}_g(B_R(p_0)\leq {\rm Vol}_g(\Omega_1)\leq
(\dfrac{(n-2)\alpha}{\varepsilon(n)})^{\frac{n}2}{\rm Vol }(B^{n}) (2R\exp (\frac3{y_0}\pi))^{n}.
$$
Thus, $X:M^n\to \mathbf R^{n+1}$ has the Euclidean volume growth.
For $n=3$,  by making use of the Gauss-Bonnet theorem, and following the same ideas
as in \cite{cl3, hlw} (see the remark in \cite{ax1} also), we also can prove that $X:M^n\to \mathbf R^{n+1}$
has the Euclidean volume growth.
\end{proof}

\section{Proof of theorems}
\vskip5mm
\noindent
In this section, we shall prove our theorems.
\vskip2mm
\noindent
{\it Proof of theorem 1.1.}
For $k_l=k(1-\frac{1}{2^{l-1}})\geq 0$ and $R_l=\frac R2(1+\frac{1}{2^{l-1}})$, we know
\begin{equation}
k_l<k_{l+1}, \ \ \lim_{l\to\infty}k_l=k, \  \ R_l>R_{l+1}, \ \ \lim_{l\to\infty}R_l=\dfrac{R}2.
\end{equation}
Since $X:M^n\to \mathbf R^{n+1}$ is $\delta$-stable, for any function $f\in \mathcal C_{c}^1(M)$, we have
\begin{equation}\label{eq:5.2}
\begin{aligned}
&\int_M|\nabla f |^2dv&\geq\delta  \int_MSf^2dv.
\end{aligned}
\end{equation}
Putting $M_{l, R}=M\cap \{u>k_l\}\cap B_R(p_0)$,  $M_{l}=M\cap \{u>k_l\}$, $u=S^{\frac{q}2}$, we have
\begin{equation}\label{eq:5.3}
\begin{aligned}
&\delta  \int_MS\big[(u-k_l)^+f\big]^2dv\leq \int_M|\nabla \big[(u-k_l)^+f\big] |^2dv\\
&= \int_{M_l}|\nabla u|^2f^2dv+\dfrac12 \int_{M_l}\nabla [(u-k_l)^+]^2\cdot \nabla f^2dv+ \int_{M_l} [(u-k_l)^+]^2|\nabla f|^2dv\\
&= \int_{M_l}|\nabla u|^2f^2dv-\dfrac12 \int_{M_l}\Delta [(u-k_l)^+]^2 f^2dv+ \int_{M_l} [(u-k_l)^+]^2|\nabla f|^2dv.\\
\end{aligned}
\end{equation}
In $\{u>k_l\}$,  since
$$
\nabla u=\dfrac{q}2S^{\frac{q}2-1}\nabla S, \ \ \dfrac12\Delta S=|\nabla A|^2-S^2,
$$
we have, by making use of  $|\nabla A|^2\geq (1+\frac2n)\frac1{4S}|\nabla S|^2$,
\begin{equation*}
\begin{aligned}
&\dfrac12\Delta [(u-k_l)^+]^2= (u-k_l)^+\Delta u+|\nabla u|^2,\\
&\Delta u=\Delta S^{\frac{q}2}=\frac{q}2S^{\frac{q}2-1}\Delta S+\frac{q}2(\frac{q}2-1)S^{\frac{q}2-2}|\nabla S|^2\\
&=q(\sqrt S)^{q-2}\big\{|\nabla A|^2-S^2\big\}+\dfrac{q-2}{qu}|\nabla u|^2\\
&\geq qS^{\frac q2-1}\big\{(1+\dfrac2n)\dfrac1{4S}|\nabla S|^2-S^2\big\}+\dfrac{q-2}{qu}|\nabla u|^2\\
&=(1+\dfrac2{n})\dfrac1{qu}|\nabla u|^2-qSu+\dfrac{q-2}{qu}|\nabla u|^2,\\
\end{aligned}
\end{equation*}
\begin{equation}\label{eq:5.4}
\begin{aligned}
&\dfrac12\Delta [(u-k_l)^+]^2= (u-k_l)^+\Delta u+|\nabla u|^2,\\
&\geq \dfrac1{q}(q-\dfrac{n-2}n)(1-\dfrac{k_l}u)|\nabla u|^2-q(u-k_l)^+Su+|\nabla u|^2.\\
\end{aligned}
\end{equation}
From (\ref{eq:5.3}) and (\ref{eq:5.4}), we obtain, for $\dfrac{n-2}n<q<\delta$,
\begin{equation}\label{eq:5.5}
\begin{aligned}
& \dfrac1{q}(q-\dfrac{n-2}n)\int_{M_l}(1-\dfrac{k_l}u)|\nabla u|^2f^2dv\\
&\leq  \int_{M_l} [(u-k_l)^+]^2|\nabla f|^2dv+ \int_MS\big[(q-\delta) \{(u-k_l)^+\}^2+qk_l(u-k_l)\big]f^2dv\\
&\leq  \int_{M_l} [(u-k_l)^+]^2|\nabla f|^2dv+ \dfrac{q^2k_l^2}{4(\delta -q)}\int_{M_l}u^{\frac{2}{q}}f^2dv.\\
\end{aligned}
\end{equation}
Since $k_{l+1}>k_l$, $\{u>k_{l+1}\}\subset\{u>k_{l}\}$ and in $\{u>k_{l+1}\}$
$$
1-\dfrac{k_l}u>1-\dfrac{k_l}{k_{l+1}}>\dfrac1{2^l}, \ \ u-k_l>k_{l+1}-k_l=\dfrac{k}{2^{l}}
$$
and since $x^s$ is a convex function for $s\geq 2$ and $x>0$, in $\{u>k_{l}\}$
$$
u^{\frac{2}q}=\big[(u-k_l)+k_l\big]^{\frac2q}\leq 2^{\frac2q-1}[(u-k_l)^{\frac2q}+k_l^{\frac2q}],
$$
we derive in view of  (\ref{eq:5.5}) and the above inequalities
\begin{equation}\label{eq:5.6}
\begin{aligned}
& \dfrac1{q}(q-\dfrac{n-2}n)\dfrac1{2^l}\int_{M_{l+1}}|\nabla u|^2f^2dv\\
&\leq  \int_{M_l} [(u-k_l)^+]^2|\nabla f|^2dv+ \dfrac{q^2k_l^2}{4(\delta -q)}2^{\frac2q-1}\int_{M_l}[(u-k_l)^{\frac2q}+k_l^{\frac2q}]f^2dv.\\
\end{aligned}
\end{equation}
Because of $\dfrac{n-2}n<q<\delta$, we know $\dfrac2q<\dfrac{2n}{n-2}$.
In $\{u>k_{l}\}$,
\begin{equation}\label{eq:5.7}
\begin{aligned}
 &u-k_{l-1}>k_{l}-k_{l-1}=\dfrac{k}{2^{l-1}},\\
 (u-k_l)^{\frac2q}&
 \leq\dfrac{(u-k_l)^{\frac2q}}{(u-k_{l-1})^{\frac2q}}\dfrac{(u-k_{l-1})^{\frac{2n}{n-2}}}{(\dfrac{k}{2^{l-1}})^{\frac{2n}{n-2}-\frac2q}}
 \leq{\big(\dfrac{2^{l-1}}k\big)^{\frac{2n}{n-2}-\frac2q}}{(u-k_{l-1})^{\frac{2n}{n-2}}},\\
 (u-k_l)^{2}&
 \leq\dfrac{(u-k_l)^{2}}{(u-k_{l-1})^{2}}\dfrac{(u-k_{l-1})^{\frac{2n}{n-2}}}{(\dfrac{k}{2^{l-1}})^{\frac{4}{n-2}}}
 \leq{\big(\dfrac{2^{l-1}}k\big)^{\frac{4}{n-2}}(u-k_{l-1})^{\frac{2n}{n-2}}},\\
 1&\leq {\big(\dfrac{2^{l-1}}k\big)^{\frac{2n}{n-2}}}{(u-k_{l-1})^{\frac{2n}{n-2}}}.
 \end{aligned}
 \end{equation}
 Hence, we obtain, for each $l$, in place of $f$ by $f_l$,
\begin{equation*}
\begin{aligned}
& \dfrac1{q}(q-\dfrac{n-2}n)\dfrac1{2^l}\int_{M_{l+1}}|\nabla u|^2f_l^2dv\leq  \int_{M_l} [(u-k_l)^+]^2|\nabla f_l|^2dv\\
&+ \dfrac{q^2k_l^2}{4(\delta -q)}2^{\frac2q-1} \big(\dfrac{2^{l-1}}k\big)^{\frac{2n}{n-2}-\frac2q}
\big[1+(1-\dfrac1{2^{l-1}})^{\frac2q} \big]
\int_{M_{l-1}}(u-k_{l-1})^{\frac{2n}{n-2}}f_l^2dv.\\
\end{aligned}
\end{equation*}
Since, in $\{u>k_{l+1}\}$,
$$
|\nabla( (u-k_l)^+f_l)|^2\leq 2|\nabla u|^2f_l^2+2[(u-k_l)^+]^2|\nabla f_l|^2,
$$
we have
\begin{equation*}
\begin{aligned}
&\int_{M_{l+1}}|\nabla( (u-k_l)^+f_l)|^2dv\\
&\leq   \big(\dfrac q{(q-\frac{n-2}n)}2^{l+1}+2\big)\int_{M_l} [(u-k_l)^+]^2|\nabla f_l|^2dv\\
&+  \dfrac {q2^{l+1}}{(q-\frac{n-2}n)}\dfrac{q^2k_l^2}{4(\delta -q)}2^{\frac2q-1} \big(\dfrac{2^{l-1}}k\big)^{\frac{2n}{n-2}-\frac2q}
\big[1+(1-\dfrac1{2^{l-1}})^{\frac2q} \big]
\int_{M_{l-1}}(u-k_{l-1})^{\frac{2n}{n-2}}f_l^2dv.\\
\end{aligned}
\end{equation*}
According to the Michael-Simon inequality, we conclude
\begin{equation}\label{eq:5.8}
\begin{aligned}
&\dfrac{1}{C_{MS}}\biggl\{\int_{M_{l+1}} \big[(u-k_{l+1})^+f_l\big]^{\frac{2n}{n-2}}dv\biggl\}^{\frac{n-2}n}\\
&\leq   \big(\dfrac q{(q-\frac{n-2}n)}2^{l+1}+2\big)\int_{M_l} [(u-k_l)^+]^2|\nabla f_l|^2dv\\
&+  \dfrac {q^32^{\frac2q}}{(q-\frac{n-2}n)(\delta -q)}k_l^22^{l-1} \big(\dfrac{2^{l-1}}k\big)^{\frac{2n}{n-2}-\frac2q}
\int_{M_{l-1}}(u-k_{l-1})^{\frac{2n}{n-2}}f_l^2dv.\\
\end{aligned}
\end{equation}
Taking, for each $l$, the function $f_l$ such that  $f_l=1$ in $M_{l+1,R_{l+1}}$, $f_l=0$ in $M\setminus M_{l,R_{l}}$,  $0\leq f_l \leq 1$ and
$|\nabla f_l|\leq \frac2{R_l-R_{l+1}}=\frac{2^{l+2}}{R}$,  we get from (\ref{eq:5.7}) and (\ref{eq:5.8})
\begin{equation*}
\begin{aligned}
&\dfrac{1}{C_{MS}}\biggl\{\int_{M_{l+1,R_{l+1}}} \big[(u-k_{l+1})^+\big]^{\frac{2n}{n-2}}dv\biggl\}^{\frac{n-2}n}\\
&\leq   \big(\dfrac q{(q-\frac{n-2}n)}2^{l+1}+2\big)\frac{2^{2l+4}}{R^2} {\big(\dfrac{2^{l-1}}k\big)^{\frac{4}{n-2}}\int_{M_{l,R_l}}(u-k_{l-1})^{\frac{2n}{n-2}}}dv\\
&+ \dfrac{q^32^{\frac2q}k^{\frac2q-\frac{4}{n-2}}}{(\delta -q)(q-\frac{n-2}n)} \big(2^{\frac{2n}{n-2}-\frac2q+1}\big)^{l-1}
\int_{M_{l-1,R_{l-1}}}(u-k_{l-1})^{\frac{2n}{n-2}}dv\\
\end{aligned}
\end{equation*}
Setting $k=\frac1{R^{\frac{n-2}n}}$ and
$$
S_l=\int_{M_{l,R_l}} \big[(u-k_l)^+\big]^{\frac{2n}{n-2}}dv,
$$
we have
\begin{equation*}
\begin{aligned}
&\dfrac{1}{C_{MS}}S_{l+1}^{\frac{n-2}n}\\
&\leq  \biggl\{ \big(\dfrac {2q}{(q-\frac{n-2}n)}+1\big)2^7\frac{\bigl(2^{\frac{3n+2}{n-2}}\bigl)^{l-1}}{R^{\frac{2n-4}{n}}}
+ \dfrac{q^32^{\frac2q}}{(\delta -q)(q-\frac{n-2}n)} \dfrac{\big(2^{\frac{2n}{n-2}-\frac2q+1}\big)^{l-1}}{R^{\frac{2(n-2)}{nq}-\frac{4}{n}}}\biggl\}
S_{l-1}\\
&\leq \biggl\{ \big(\dfrac {2q}{(q-\frac{n-2}n)}+1\big)2^7\frac{1}{R^{\frac{2n-4}{n}}}
+ \dfrac{q^32^{\frac2q}}{(\delta -q)(q-\frac{n-2}n)} \dfrac{1}{R^{\frac{2(n-2)}{nq}-\frac{4}{n}}}\biggl\}C^{l-1}S_{l-1}\\
&=\dfrac{C_0}{C_{MS}}C^{l-1}S_{l-1},
\end{aligned}
\end{equation*}
where
$$
\begin{aligned}
&C=\max\{2^{\frac{3n+2}{n-2}},  2^{\frac{2n}{n-2}-\frac2q+1}\}>1,\\
&C_0= C_{MS}\biggl\{\big(\dfrac {2q}{(q-\frac{n-2}n)}+1\big)2^7\frac{1}{R^{\frac{2n-4}{n}}}
+ \dfrac{q^32^{\frac2q}}{(\delta -q)(q-\frac{n-2}n)} \dfrac{1}{R^{\frac{2(n-2)}{nq}-\frac{4}{n}}}\biggl\}.
\end{aligned}
$$
We derive a recursion formula
\begin{equation}
\begin{aligned}
&S_{l+1}\leq C_0^{\frac{n}{n-2}}(C^{\frac n{n-2}})^{l-1}S_{l-1}^{\frac n{n-2}}.
\end{aligned}
\end{equation}
Thus, we have
\begin{equation*}
\begin{aligned}
&S_{2l+1}\leq C_0^{\frac{n}{n-2}}(C^{\frac n{n-2}})^{2l-1}S_{2l-1}^{\frac n{n-2}}.
\end{aligned}
\end{equation*}
We obtain
\begin{equation*}
\begin{aligned}
S_{2l+1}&\leq \bigl(C_0^{\frac{n}{n-2}}\big)^{\sum_{j=0}^{l-1}(\frac{n}{n-2})^j}
\big(C^{\frac{n}{n-2}}\bigl)^{\sum_{j=0}^{l-1}(2(l-j)-1)(\frac n{n-2})^j}\bigl(S_{1}\bigl)^{(\frac n{n-2})^l}\\
&\leq   \bigl(C_0^{\frac{n}2}C^{\frac{n^2}2}S_1\bigl)^{(\frac{n}{n-2})^l}.
\end{aligned}
\end{equation*}
Since $\frac{n-2}{n}<q$ and for sufficiently large $R$
$$
 \dfrac{\int_{B_{R} (p_0)}S^{\frac{qn}{n-2}}dv}{R^{\frac{(n-2)}{q}-2}}\leq \varepsilon_1,
 $$
 takeing  $\varepsilon_1$ such that
$$
\varepsilon_1C^{\frac{n^2}2}C_{MS}\biggl\{\big(\dfrac {2q}{(q-\frac{n-2}n)}+1\big)2^7
+ \dfrac{q^32^{\frac2q}}{(\delta -q)(q-\frac{n-2}n)} \biggl\}^{\frac{n}2}<1,
$$
$C_0^{\frac{n}2}C^{\frac{n^2}2}S_1<1$, we get $S_{2l+1}\to 0$. Because $\{S_l\}$ is a monotonous  sequence,
we obtain $S_{l}\to 0$. Hence, $X:M^n\to \mathbf R^{n+1}$   is a hyperplane. This completes  the proof of theorem 1.1.
\begin{flushright}
 $\square$
\end{flushright}

\vskip2mm
\noindent
{\it Proof of theorem 1.2}.
Since $X:M^n\to \mathbf R^{n+1}$ is complete two-sided $\delta$-stable minimal hypersurface,
for any function $f\in \mathcal C_{c}^1(M)$, we have
\begin{equation}\label{eq:5.10}
\begin{aligned}
&\delta  \int_MSf^2dv\leq \int_M|\nabla f |^2dv.
\end{aligned}
\end{equation}
Taking $(S+\epsilon)^k$ with $\epsilon>0$, we have
\begin{equation}\label{eq:5.11}
\begin{aligned}
&\delta  \int_MS(S+\epsilon)^{2k}f^2dv\leq \int_M|\nabla ((S+\epsilon)^k f) |^2dv\\
&=\int_M(S+\epsilon)^{2k}|\nabla f |^2dv+2k\int_M(S+\epsilon)^{2k-1}f\nabla S\cdot \nabla f dv\\
&+k^2\int_Mf^2(S+\epsilon)^{2(k-1)}|\nabla S |^2dv.\\
\end{aligned}
\end{equation}
According to the Simons formula
$$
\dfrac12\Delta S=|\nabla A|^2-S^2\geq (1+\dfrac{2}n)\dfrac{4}{S+\epsilon}|\nabla S|^2-S^2,
$$
\begin{equation*}
\begin{aligned}
&-\dfrac{(2k-1)}2\int_M(S+\epsilon)^{2k-2}f^2|\nabla S|^2dv-\int_M(S+\epsilon)^{2k-1}f\nabla S\cdot \nabla f dv\\
&\geq (1+\dfrac{2}n)\dfrac14\int_Mf^2(S+\epsilon)^{2k-2}|\nabla S |^2dv-\int_{M}(S+\epsilon)^{2k}Sf^2\\
\end{aligned}
\end{equation*}
that is,
\begin{equation}\label{eq:5.12}
\begin{aligned}
&(k+\dfrac1{2n}-\dfrac14)\int_M(S+\epsilon)^{2k-2}f^2|\nabla S|^2dv\\
&\leq \int_{M}(S+\epsilon)^{2k}Sf^2-\int_M(S+\epsilon)^{2k-1}f\nabla S\cdot \nabla f dv\\
\end{aligned}
\end{equation}
If $\int_M(S+\epsilon)^{2k-1}f\nabla S\cdot \nabla f dv\leq 0$, we have
\begin{equation}\label{eq:5.13}
\begin{aligned}
&\delta  \int_MS(S+\epsilon)^{2k}f^2dv\\
&\leq \int_M(S+\epsilon)^{2k}|\nabla f |^2dv
+k^2\int_Mf^2(S+\epsilon)^{2(k-1)}|\nabla S |^2dv.\\
\end{aligned}
\end{equation}
In view of,  for $s>0$,
\begin{equation}
\begin{aligned}
&-2\int_M(S+\epsilon)^{2k-1}f\nabla S\cdot \nabla f dv\\
&\leq s\int_M(S+\epsilon)^{2k}|\nabla f |^2dv
+\dfrac1s\int_Mf^2(S+\epsilon)^{2(k-1)}|\nabla S |^2,
\end{aligned}
\end{equation}
we have
\begin{equation}\label{eq:5.15}
\begin{aligned}
&(2k+\dfrac1{n}-\dfrac12-\dfrac1s)\int_M(S+\epsilon)^{2k-2}f^2|\nabla S|^2dv\\
&\leq 2\int_{M}(S+\epsilon)^{2k}Sf^2dv+s\int_M(S+\epsilon)^{2k}|\nabla f |^2dv.\\
\end{aligned}
\end{equation}
From (\ref{eq:5.13}) and (\ref{eq:5.15}), we obtain
\begin{equation}\label{eq:5.16}
\begin{aligned}
&\biggl\{(2k+\dfrac1{n}-\dfrac12-\dfrac1s)\dfrac{\delta}{k^2}-2\bigl\}\int_{M}(S+\epsilon)^{2k}Sf^2dv\\
&\leq \bigl\{s+\dfrac{(2k+\dfrac1{n}-\dfrac12-\dfrac1s)}{k^2}\bigl\}\int_M(S+\epsilon)^{2k}|\nabla f |^2dv.\\
\end{aligned}
\end{equation}
If $s\to \infty$,
$$
(2k+\dfrac1{n}-\dfrac12-\dfrac1s)\dfrac{\delta}{k^2}-2\to (2k+\dfrac1{n}-\dfrac12)\dfrac{\delta}{k^2}-2.
$$
We take  $k$ such that $\delta-\sqrt{\delta(\delta-\frac{n-2}n)}<2k<\delta+\sqrt{\delta(\delta-\frac{n-2}n)}$,
we know
$$
(2k+\dfrac1{n}-\dfrac12-\dfrac1s)\dfrac{\delta}{k^2}-2>0
$$
for a sufficient large  $s$.
Hence, we obtain
\begin{equation}\label{eq:5.17}
\begin{aligned}
&\int_{M}(S+\epsilon)^{2k}Sf^2dv
\leq C\int_M(S+\epsilon)^{2k}|\nabla f |^2dv\\
\end{aligned}
\end{equation}
If $\int_M(S+\epsilon)^{2k-1}f\nabla S\cdot \nabla f dv>0$, we have
\begin{equation}\label{eq:5.18}
\begin{aligned}
&\delta  \int_MS(S+\epsilon)^{2k}f^2dv\\
&\leq (1+s_1) \int_M(S+\epsilon)^{2k}|\nabla f |^2dv
+k^2(1+\dfrac{1}{s_1})\int_Mf^2(S+\epsilon)^{2(k-1)}|\nabla S |^2dv\\
\end{aligned}
\end{equation}
in view of,  for $s_1>0$,
\begin{equation*}
\begin{aligned}
&2k\int_M(S+\epsilon)^{2k-1}f\nabla S\cdot \nabla f dv\\
&\leq s_1\int_M(S+\epsilon)^{2k}|\nabla f |^2dv
+\dfrac{k^2}{s_1}\int_Mf^2(S+\epsilon)^{2(k-1)}|\nabla S |^2
\end{aligned}
\end{equation*}
and
\begin{equation}\label{eq:5.19}
\begin{aligned}
&(k+\dfrac1{2n}-\dfrac14)\int_M(S+\epsilon)^{2k-2}f^2|\nabla S|^2dv\\
&\leq \int_{M}(S+\epsilon)^{2k}Sf^2dv\\
\end{aligned}
\end{equation}
according to (\ref{eq:5.12}).
From (\ref{eq:5.18}) and (\ref{eq:5.19}), we obtain
\begin{equation}\label{eq:5.19}
\begin{aligned}
&\biggl\{(k+\dfrac1{2n}-\dfrac14)\dfrac{s_1\delta}{k^2(s_1+1)}-1\biggl\}\int_{M}(S+\epsilon)^{2k}Sf^2dv\\
&\leq \bigl\{\dfrac{s_1}{k^2}(k+\dfrac1{2n}-\dfrac14)\int_M(S+\epsilon)^{2k}|\nabla f |^2dv\\
\end{aligned}
\end{equation}
Since $\dfrac{s_1}{(s_1+1)}\to 1$ if $s_1\to \infty$, in the same way, we know
\begin{equation}\label{eq:5.9}
\begin{aligned}
&\int_{M}(S+\epsilon)^{2k}Sf^2dv
\leq C\int_M(S+\epsilon)^{2k}|\nabla f |^2dv\\
\end{aligned}
\end{equation}
for a sufficient large $s_1$.
Letting $\epsilon\to 0$, we conclude
\begin{equation}\label{eq:5.9}
\begin{aligned}
&\int_{M}S^{2k+1}f^2dv
\leq C\int_MS^{2k}|\nabla f |^2dv.\\
\end{aligned}
\end{equation}
Taking $p=4k+2$ and in place of $f$ by $f^{\frac p2}$ and making use of H\"older inequality,
we have
\begin{equation}\label{eq:5.9}
\begin{aligned}
&\int_{M}(\sqrt {S}f)^pdv
\leq \frac{p^2}{4}C\int_MS^{\frac p2-1}f^{2(\frac p2-1)}|\nabla f |^2dv\\
&\leq \frac{p^2}{4}C\big(\int_M(\sqrt S f)^pdv\bigl)^{\frac{ p- 2}{p}}\big(\int_M|\nabla f |^pdv\big)^{\frac2p}.
\end{aligned}
\end{equation}
We obtain
\begin{equation}\label{eq:5.9}
\begin{aligned}
&\int_{M}(\sqrt {S}f)^pdv
\leq C_2\int_M|\nabla f |^pdv.
\end{aligned}
\end{equation}
We finishes the proof of theorem 1.2.
\begin{flushright}
 $\square$
\end{flushright}
\vskip5mm

\noindent
{\it Proof of theorem 1.3}. Since $\delta_1(n)>\delta_0(n)$ and $\delta_1(n)$-stable is $\delta_0(n)$-stable, from
the theorem 4.2, we know that $X:M^n\to \mathbf R^{n+1}$ has the Euclidean volume growth. According to the
corollary 1.1, we obtain that $X:M^n\to \mathbf R^{n+1}$ is the hyperplane.
\begin{flushright}
 $\square$
\end{flushright}
\bibliographystyle{amsplain}

\end {document}